\def\End{\operatorname{End}}
\def\dim{\operatorname{dim}}
\def\Dist{\operatorname{Dist}}
\def\Ind{\operatorname{Ind}}
\def\Coind{\operatorname{Coind}}
\def\Hom{\operatorname{Hom}}
\def\Rep{\operatorname{Rep}}
\def\UU{\mathcal{U}}
\def\p{\mathfrak{p}}
\def\g{\mathfrak{g}}
\def\k{\mathfrak{k}}
\def\l{\mathfrak{l}}
\def\s{\mathfrak{s}}
\def\o{\mathfrak{o}}
\def\ol{\overline}
\def\sub{\subseteq}
\def\xto{\xrightarrow}
\newtheorem{thm}{Theorem}[section]
\newtheorem{lemma}[thm]{Lemma}
\newtheorem{prop}[thm]{Proposition}
\theoremstyle{definition}
\theoremstyle{remark}
\newtheorem{remark}[thm]{Remark}
\numberwithin{equation}{section}
\begin{document}
\title{Two geometric proofs of the classification of algebraic supergroups with semisimple representation theory}

\author[Alex Sherman]{Alexander Sherman}

\maketitle
\pagestyle{plain}

\begin{abstract}  We present two novel proofs of the known classification of connected affine algebraic supergroups $G$ such that $\Rep G$ is semisimple.  The proofs are geometrically motivated, although both rely on an algebraic lemma that characterizes $\o\s\p(1|2n)$ amongst simple Lie superalgebras.  
\end{abstract}

\section{Introduction}

Let $G$ be a connected affine algebraic supergroup over an algebraically closed field $k$ of characteristic zero.  When is the category $\Rep(G)$ semisimple? 

If $G=G_0$ is an algebraic group, the answer to this question is elegant: $\Rep(G)$ is semisimple if and only if $G$ is reductive, meaning that $R_u(G)$, the unipotent radical of $G$, is trivial.  Equivalently, $G$ has a faithful semisimple representation.  This theorem leads to the study of reductive algebraic groups, which has a well-known and beautiful classification in terms of root datum.  

For supergroups, semisimplicity is a rare phenomenon.  Perhaps the most natural supergroup to study is the general linear supergroup, $G=GL(V)$, for a super vector space $V$.  However its representation category is not semisimple unless $V$ is purely even or purely odd, i.e. unless $G=G_0$. 

In \cite{hochschild1976semisimplicity} and \cite{djokovic1976semisimplicity}, it was proven that the only simple Lie superalgebra which  has semisimple finite-dimensional representation theory and is not purely even is $\o\s\p(1|2n)$.  If $\g$ is an arbitrary Lie superalgebra with semisimple finite-dimensional representation theory, then it must be a direct product of simple ones with the same property, so this effectively answers our question on the level of Lie superalgebras.  We write the formal statement on the level of supergroups in \cref{main_thm_intro}.  

However the original proof of Djokovic and Hochschild came before Kac's seminal paper on Lie superalgebras (\cite{kac1977lie}) and before much of the existing technology in the study of Lie superalgebras and supergroups had been developed.  We seek to supplement their original result with new proofs which also provide insight into the lack of semisimplicity for the representation theory of Lie superalgebras and supergroups.

Therefore we present two geometric proofs of the following classification theorem:
\begin{thm}\label{main_thm_intro}
	Let $G$ be a connected algebraic supergroup.  Then $\Rep(G)$ is semisimple if and only if 
	\[
	G\cong K\times OSp(1|2n_1)\times\cdots\times OSp(1|2n_k)
	\]
	for a reductive algebraic group $K$ and positive integers $n_1,\dots,n_k$.  
\end{thm}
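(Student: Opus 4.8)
The plan is to treat the two implications separately, the ``if'' direction being short and the ``only if'' direction containing all the work. For the ``if'' direction, $\Rep(K)$ is semisimple because $K$ is reductive and $\ch k=0$, while $\Rep(OSp(1|2n))$ is semisimple because $OSp(1|2n)_0=Sp(2n)$ is simply connected, so that $\Rep(OSp(1|2n))$ is precisely the category of finite-dimensional $\o\s\p(1|2n)$-modules, which is semisimple by \cite{djokovic1976semisimplicity, hochschild1976semisimplicity}. For a direct product one has $\Rep(G_1\times G_2)\simeq\Rep(G_1)\boxtimes\Rep(G_2)$, and the Deligne tensor product of semisimple $k$-linear abelian categories is semisimple; iterating gives the claim.

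For the ``only if'' direction, assume $\Rep(G)$ is semisimple and set $\g=\operatorname{Lie}(G)$. First I would show $G_0$ is reductive: $R_u(G_0)$ is characteristic in $G_0$, hence normal in $G$, and if it were nontrivial one produces a non-split self-extension of the trivial module (for instance inside $\mathcal{O}(G)$, using the $R_u(G_0)$-invariants as a $G$-submodule together with $H^1(R_u(G_0),k)\neq 0$), contradicting semisimplicity. Next, since $G$ is connected a finite-dimensional $G$-module is the same as a finite-dimensional $\g$-module whose restriction to $\g_0$ integrates to $G_0$; I would transfer semisimplicity of $\Rep(G)$ to complete reducibility of finite-dimensional $\g$-modules, keeping track of the fundamental group of $G_0$ (harmless since the even simple factors that appear will turn out to be $Sp(2n_i)$, already simply connected). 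Using complete reducibility I would then show $\g$ is ``reductive'': its radical is central and purely even, since otherwise a solvable or odd-central element yields an explicit non-split $1|1$-dimensional module on which $\g_0$ acts trivially, again a contradiction. Hence $\g=\mathfrak{z}(\g)\oplus[\g,\g]$ with $[\g,\g]=\g^{(1)}\oplus\cdots\oplus\g^{(r)}$ a sum of simple ideals.

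Now the crux: for each simple ideal $\g^{(i)}$ with $\g^{(i)}_{\bar 1}\neq 0$, its finite-dimensional representations are completely reducible (they occur among $\g$-modules), so by the algebraic lemma characterizing $\o\s\p(1|2n)$ among simple Lie superalgebras one gets $\g^{(i)}\cong\o\s\p(1|2n_i)$. This is where the two proofs diverge: each supplies a \emph{geometric} verification of the hypothesis of, and hence application of, that lemma --- one by analyzing the $G_0$-variety $\g_{\bar 1}$ and the resulting odd-root data, the other by computing cohomology of line bundles on the flag supervariety $G/B$, where the failure of semisimplicity is visible for every type other than $\o\s\p(1|2n)$. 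Finally I would reconstruct $G$: the connected subsupergroup of $G$ with Lie superalgebra $\o\s\p(1|2n_i)$ is normal, and its even part, being the connected group with Lie algebra $\mathfrak{sp}_{2n_i}$ acting faithfully on $\g^{(i)}_{\bar 1}\cong k^{2n_i}$, must be $Sp(2n_i)$; so this subgroup is $OSp(1|2n_i)$, whose center is trivial since $-1\in Sp(2n_i)$ acts by $-1$ on $k^{2n_i}$. The ideals being pairwise commuting with trivial pairwise intersections, $G$ splits as $K\times OSp(1|2n_1)\times\cdots\times OSp(1|2n_k)$, with $K$ the connected reductive group integrating $\mathfrak{z}(\g)$ together with the purely even simple ideals.

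The main obstacle is the $\o\s\p(1|2n)$-characterization: ruling out every simple Lie superalgebra with nonzero odd part other than $\o\s\p(1|2n)$ amounts to exhibiting a non-split self-extension of some finite-dimensional module for each type on Kac's list, and such extensions are not visible for free --- producing them (geometrically, via $G/B$ or via $\g_{\bar 1}$) is exactly what the body of the paper does. A secondary technical point is the passage between $\Rep(G)$, $\Rep(G_0)$ and finite-dimensional $\g$-modules while controlling fundamental groups, together with the proof that $\g$ must be reductive.
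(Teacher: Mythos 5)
Your outline collapses exactly at the point where the paper's actual content lives, and what you put in its place is a placeholder rather than a proof. The ``crux'' step --- showing that each simple ideal with nonzero odd part must be $\o\s\p(1|2n_i)$ --- is not carried out: you propose to run through Kac's list and exhibit a non-split self-extension for every other type (via unspecified cohomology of line bundles on $G/B$, or an unspecified analysis of the $G_0$-variety $\g_{\ol{1}}$), and you explicitly concede that producing these extensions ``is exactly what the body of the paper does.'' That is the whole theorem; without it your argument proves nothing beyond the formal reductions. It is also not what the paper does: the paper never invokes Kac's classification and never argues type-by-type. Its algebraic lemma says that a simple quasireductive $\g$ with $\g_{\ol{1}}\neq 0$ and $\g_{\ol{1}}^{ss}=\{0\}$ (no odd element whose square is semisimple) is $\o\s\p(1|2n)$, proved directly with root-system combinatorics; the two geometric proofs then establish the single uniform statement that semisimplicity of $\Rep(G)$ forces $\g_{\ol{1}}^{ss}=\{0\}$ --- one by producing, from a nonzero $u\in\g_{\ol{1}}^{ss}$, a function $f\in k[G]$ with $u_R f=1$ (so $k\cdot 1$ does not split off $k[G]$), the other via the invariant ``ghost'' distribution $v\in(\UU\g/\g_{\ol{0}}\UU\g)^{G}$ and the Duflo--Serganova functor $DS_u$. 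Neither resembles your guessed mechanisms, and neither requires exhibiting extensions case by case.

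A secondary but real problem is your passage from semisimplicity of $\Rep(G)$ to complete reducibility of \emph{all} finite-dimensional $\g^{(i)}$-modules, which you need in order to quote the Djokovic--Hochschild characterization. When $G_0$ is not simply connected, not every $\g$-module lies in $\Rep(G)$, so this transfer is not automatic; your remark that the fundamental group issue is ``harmless since the even simple factors will turn out to be $Sp(2n_i)$'' assumes the conclusion you are trying to reach. (One can repair this, e.g.\ by computing extensions via relative Lie superalgebra cohomology $H^1(\g,\g_{\ol{0}};-)$, which is insensitive to the choice of connected $G_0$, or by working with a criterion, like the paper's $\g_{\ol{1}}^{ss}=\{0\}$, that depends only on $\g$; but as written the step is circular.) The ``if'' direction and the final reconstruction of $G$ from its Lie superalgebra are fine and close in spirit to the paper's Theorem on reconstruction, though the paper needs only Djokovic's explicit invariant element and the evaluation $\varepsilon(v)\neq 0$, not the full semisimplicity theorem you cite.
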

We follow the convention of \cite{entova2020jacobson} to write $OSp(1|2n)$ for what is more precisely written $SOSp(1|2n)$, i.e. the form-preserving automorphisms of $(k^{1|2n},(-,-))$ with Berezinian 1.  

The statement of the theorem is reduced, via an algebraic argument, to the following statement.  Define
\[
\g_{\ol{1}}^{ss}=\{u\in\g_{\ol{1}}:u^2(=[u,u]/2)\text{ is semisimple in }\g_{\ol{0}}\}.
\]
Note that this space was considered previously by Serganova (in private communication).
\begin{thm}\label{main_redn_thm}
	$\Rep G$ is semisimple if and ony if $G_0$ is reductive and $\g_{\ol{1}}^{ss}=\{0\}$.  
\end{thm}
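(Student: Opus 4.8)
The plan is to prove both directions by connecting semisimplicity of $\Rep G$ to the behavior of odd elements $u \in \g_{\ol 1}$ with $u^2$ semisimple. Let me think about the structure.

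Forward direction: Suppose $\Rep G$ is semisimple. First I'd show $G_0$ is reductive: since restriction $\Rep G \to \Rep G_0$ has a right adjoint (coinduction) that is exact (as $k[G]$ is free over $k[G_0]$ in char 0), and... actually more directly, any $G_0$-module extends to a $G$-module, or we use that $\Rep G_0$ is a subquotient category. Actually the cleanest: $\Rep G_0$ is semisimple because coinduction $\Coind_{G_0}^G$ is exact and faithful, so it reflects semisimplicity — actually I want the other direction. Let me reconsider: since $k$ has char 0, $G_0$ reductive $\iff$ $\Rep G_0$ semisimple. If $\Rep G$ is semisimple, then for a $G_0$-module $M$, $\Coind_{G_0}^G M$ is a $G$-module, hence semisimple, and $M$ embeds in its restriction, which is then semisimple — so $M$ is semisimple. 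Hence $G_0$ reductive. Next, suppose for contradiction $u \in \g_{\ol 1}^{ss}$, $u \neq 0$. The key is to build a non-semisimple $G$-module, or equivalently a nontrivial self-extension. I would use that $u$ generates (together with $u^2$) a sub-super-structure resembling $\mathfrak{osp}(1|2)$ or, when $u^2 = 0$, an odd torus $\mathbb{G}_a^{0|1}$-like object — and pull back a known non-semisimple module. Concretely, if $u^2 = 0$ is a nonzero odd element squaring to zero, the subalgebra $ku$ is a copy of $k^{0|1}$ (a $1|0$... rather $0|1$-dimensional abelian Lie superalgebra); its enveloping algebra is $k[\epsilon]/\epsilon^2$ with $\epsilon$ odd, whose representation category is not semisimple. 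One must promote this to a supergroup-level statement and check the module produced is a genuine $G$-module — this requires integrating, which is where adjoint-orbit geometry enters.

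Reverse direction: Suppose $G_0$ is reductive and $\g_{\ol 1}^{ss} = \{0\}$. I want $\Rep G$ semisimple. Every $u \in \g_{\ol 1}$ now has $u^2$ nilpotent in $\g_{\ol 0}$. Take $M \in \Rep G$ and a submodule $N \subseteq M$; I need a $G$-splitting. Since $G_0$ is reductive, there is a $G_0$-equivariant projection $p : M \to N$. The obstruction to $G$-equivariance lives in $\Hom_{G_0}(\g_{\ol 1} \otimes (M/N), N)$ (a Lie superalgebra cohomology / Harish-Chandra obstruction class), and I'd run an averaging or Casimir-type argument: the condition $\g_{\ol 1}^{ss} = \{0\}$ forces the relevant operators $\rho(u)^2 = \rho(u^2)$ to be nilpotent, and combined with semisimplicity of $\rho|_{G_0}$ this pins down enough vanishing to correct $p$ to a $G$-map. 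The technically delicate point is handling the full supergroup $G$ and not just the Harish-Chandra pair $(\g, G_0)$, plus verifying the cohomological obstruction really is controlled by $\g_{\ol 1}^{ss}$.

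The main obstacle I anticipate is the reverse direction's cohomological step: showing that $\g_{\ol 1}^{ss} = \{0\}$ implies vanishing of the extension-splitting obstruction for \emph{all} finite-dimensional $M$. Pointwise, each odd operator having nilpotent square is weak; one needs a genuinely global/geometric input — e.g. that the closure of the $G_0$-orbit of any nonzero $u \in \g_{\ol 1}$ contains $0$ only when $u^2$ is nilpotent, so that a Jacobson–Morozov or limit argument degenerates any would-be nonsplit extension to a split one — to leverage it into semisimplicity. Making that limiting argument precise at the level of the (possibly non-reductive, non-classical) supergroup $G$, rather than assuming $G$ is already of the form in \cref{main_thm_intro}, is the crux; the forward direction, by contrast, should reduce fairly directly to exhibiting the $k[\epsilon]/\epsilon^2$-type non-semisimple module and checking it integrates.
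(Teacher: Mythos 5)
There are genuine gaps in both directions of your plan. For the forward direction, your mechanism for getting a contradiction from a nonzero $u\in\g_{\ol{1}}^{ss}$ --- ``exhibit the $k[\epsilon]/\epsilon^2$-type non-semisimple module and check it integrates'' --- does not work as stated: the subalgebra spanned by $u$ and $h=u^2$ (equivalently a morphism from $\mathbb{M}/\mathbb{G}_a$ to $G$) has non-semisimple representation theory, but its modules do not extend to $G$-modules, and restriction of $G$-modules to it need not stay semisimple, so the mere existence of this subalgebra is not a contradiction with semisimplicity of $\Rep(G)$. The paper supplies the missing mechanism: semisimplicity of $\Rep(G)$ is equivalent to $k\cdot 1$ splitting off the coordinate ring $k[G]$ (\cref{k_splits_k[G]_lemma}), and a nonzero $u\in\g_{\ol{1}}^{ss}$ yields a nowhere-vanishing odd right-invariant vector field $u_R$ on $G$ whose square acts semisimply on $k[G]$; \cref{odd_ss_X_no_splitting_lemma} then produces $f\in k[G]$ with $uf=1$ (the semisimplicity of $h$ is used to pass to the $0$-eigencomponent and invert $1+\eta_0$), which rules out any $u$-equivariant projection onto $k\cdot1$. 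The second proof replaces this with the invariant distribution $v\in(\UU\g/\g_{\ol{0}}\UU\g)^{G}$ on $G/G_0$: semisimplicity forces $\varepsilon(v)\neq0$, while $u\cdot v=0$ and projectivity of $\UU\g/\g_{\ol{0}}\UU\g$ force $v=u\cdot v'$, hence $\varepsilon(v)=0$. Nothing in your sketch plays the role of $k[G]$, of $k[G/G_0]$, or of $v$. (Your coinduction argument for reductivity of $G_0$ is fine in spirit; the paper instead uses that $k[G_0]$ sits inside $k[G]$ as a $G_0$-subalgebra.)

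For the reverse direction the gap is larger: you propose correcting a $G_0$-equivariant projection by an averaging/obstruction-vanishing argument, but you give no mechanism by which $\g_{\ol{1}}^{ss}=\{0\}$ kills the obstruction, and you yourself identify this as the crux; moreover your opening claim that $\g_{\ol{1}}^{ss}=\{0\}$ forces every $u^2$ to be nilpotent is not justified at that stage (it is only clear a posteriori). The paper does not attempt any such direct cohomological argument. Instead the converse is proved structurally: \cref{sufficient_osp(1|2n)} is a root-system argument showing that a simple quasireductive Lie superalgebra with nonzero odd part and $\g_{\ol{1}}^{ss}=\{0\}$ must be $\o\s\p(1|2n)$, so that $G\cong K\times OSp(1|2n_1)\times\cdots\times OSp(1|2n_k)$ as in \cref{main_thm_intro}; semisimplicity of $\Rep OSp(1|2n)$ is then established by exhibiting the explicit Djokovic--Hochschild invariant $v=(1+t_1)(3+t_2)\cdots((2n-1)+t_n)$ in $(\UU\g/\g_{\ol{0}}\UU\g)^{\g}$ and checking $\varepsilon(v)\neq0$, using the criterion from the second proof. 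By deliberately avoiding the structural classification, your proposal leaves this direction without a proof, and it is not clear that the suggested Jacobson--Morozov/degeneration idea can be made to work.
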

We note the following nice interpretation of \cref{main_redn_thm}.  If $u\in\g_{\ol{1}}^{ss}$, write $h:=u^2$. We define $DS_u$ to be the functor $V\mapsto (V^h)_u$ on the category of $\g$-modules.  Here, $V^h$ denotes the $h$-invariants of $V$, and since $u^2=h$ we have that $u$ acts by a square-zero operator on $V^h$, and $(V^h)_u$ denotes its cohomology.  We call $DS_u$ a Duflo-Serganova functor.

Then $DS_u$ is a tensor functor and enjoys many of the same properties as the usual Duflo-Serganova functor defined in \cite{duflo2005associated}, where $u^2=0$ by assumption.  We may consider $\g_{\ol{1}}^{ss}$ as the parameter space for Duflo-Serganova functors on the category of $\g$-modules.  Then \cref{main_redn_thm} tells us that the existence of Duflo-Serganova functors is only a feature of algebraic supergroups with non-semisimple representation theory.

We also call attention to recent work of Entova and Serganova in \cite{entova2020jacobson} where a different equivalent condition for semisimplicity of $\Rep G$ was given.  There, the authors define a subset $\g_{neat}\sub\g_{\ol{1}}$, which consists of all $u\in\g_{\ol{1}}$ with the property that $u^2$ is nilpotent, and such that the action of $u$ on any finite-dimensional representation of $G$ may be extended to an action of $\o\s\p(1|2)$.  They prove that $\Rep(G)$ is semisimple if and only if $\g_{neat}=\g_{\ol{1}}$.  One consequence of this is that if $\Rep(G)$ is semisimple, then $\Hom(\mathbb{G}_{a}^{1|1},G)=\g_{\ol{1}}$, where $\mathbb{G}_a^{1|1}$ is the supergroup with Lie superalgebra generated freely by one odd element and $(\mathbb{G}_a^{1|1})_0=\mathbb{G}_a$.  

On the other hand, \cref{main_redn_thm} may be stated by saying that $\Hom(\mathbb{M}/\mathbb{G}_a,G)=\{0\}$, where $\mathbb{M}$ is the minuscule supergroup defined in section 5 of \cite{entova2020jacobson}.  The supergroup $\mathbb{M}/\mathbb{G}_a$ has a one-dimensional odd part and an infinite-dimensional pro-reductive central even part, i$.$e$.$ an infinite-dimensional torus.  We may view $\Rep(\mathbb{M}/\mathbb{G}_a)$ as the category of representations of the two-dimensional Lie superalgebra freely generated by an odd element $u$ such that $[u,u]$ acts semisimply.  In general we have an identification $\Hom(\mathbb{M}/\mathbb{G}_a,G)=\g_{\ol{1}}^{ss}$.

\begin{remark}
	The author has declined to use a term for the property of a supergroup that $\Rep(G)$ be semisimple.  This is due to fear of overuse of particular words, e.g. semisimple or reductive.  To avoid confusion, we don't give this property a name.
\end{remark}

\subsection{Acknowledgments}  The author would like to thank Vera Serganova, Alexander Alldridge, and Inna Entova-Aizenbud for helpful discussions.  This research was partially supported by ISF grant 711/18 and NSF-BSF grant 2019694.

\section{Preliminary arguments}

We refer to \cite{carmeli2011mathematical} for background on algebraic supergroups.  We write $G$ for a connected affine algebraic supergroup, and $\g$ for its Lie superalgebra. If $u\in\g_{\ol{1}}$ we will write $u^2$ for $[u,u]/2$. The even part of $G$ is written $G_0$.  Let $\Rep(G)$ denote the category of rational $G$-modules.  Recall that $\Rep(G)$ is equivalent to the category of $\g$-modules that also integrate to an action of $G_0$.  

We write $k$ for the trivial representation of $G$.  The following is well-known.

\begin{lemma}\label{proj_k}
	The category $\Rep(G)$ is semisimple if and only if $k$ is projective.   
\end{lemma}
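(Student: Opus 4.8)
The plan is to prove the equivalence $\Rep(G)$ semisimple $\iff$ $k$ is projective directly, using standard homological algebra together with the fact that $\Rep(G)$ is the category of comodules over the (super-)Hopf algebra $k[G]$ of regular functions, equivalently the category of rational $\g$-modules integrating the $G_0$-action. The forward direction is immediate: if every object of $\Rep(G)$ is a direct sum of simples, then in particular every short exact sequence splits, so every object—including $k$—is projective (indeed, projectivity, injectivity, and semisimplicity all coincide for an object in a semisimple abelian category).

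For the converse, the key point is that $k[G]$ is a \emph{faithfully flat} (even free, after the Nichols--Zoeller-type theorems for affine group schemes) Hopf algebra, so $\Rep(G)$ has enough injectives—these are the \emph{rational injective hulls}, and the coinduced (cofree) comodules $V \otimes k[G]$ are injective for any super vector space $V$. Dually, $\Rep(G)$ has enough projectives because $G$ is affine and we work in characteristic zero with $G_0$ reductive (or more cheaply: the category of $\g$-modules that integrate to $G_0$ has projective covers since it is a category of modules over the hyperalgebra / distribution algebra $\Dist(G)$, which is a "good" algebra). Granting this, the argument is: first show that if $k$ is projective then every \emph{injective} object $I$ of $\Rep(G)$ is projective. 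This uses the tensor structure—for any object $M$, one has $I \otimes M^{*}$ and the fact that $\Hom_G(M, -) \cong \Hom_G(k, M^{*} \otimes -)$, so $\Ext^{1}_G(M, I) \cong \Ext^{1}_G(k, M^{*}\otimes I)$; since $M^{*} \otimes I$ is again injective (injectives form a tensor ideal, $k[G]$ being a coalgebra so that $k[G]\otimes M^{*}$ is cofree), and $\Ext^{1}_G(k, J)=0$ for all injective $J$ is \emph{automatic}, this particular line instead should run the other way. Let me restate the mechanism cleanly.

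The cleanest route: for any $M \in \Rep(G)$ and any $N \in \Rep(G)$, internal-hom adjunction gives
\[
\Ext^{i}_G(M, N) \;\cong\; \Ext^{i}_G(k,\, M^{*}\otimes N).
\]
Now take $N$ arbitrary and choose a projective resolution $P_\bullet \to M^{*}\otimes N$; but more directly, since $k$ is projective, $\Ext^{i}_G(k, -) = 0$ for all $i>0$, hence $\Ext^{i}_G(M,N)=0$ for \emph{all} $M, N$ and all $i>0$. In particular $\Ext^{1}_G(M,N)=0$ for every pair of objects, which is precisely the statement that every short exact sequence in $\Rep(G)$ splits, i.e. $\Rep(G)$ is semisimple. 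So the heart of the matter is just the adjunction isomorphism $\Ext^{i}_G(M,N)\cong\Ext^{i}_G(k, M^{*}\otimes N)$, valid because $\Rep(G)$ is a symmetric monoidal abelian category with duals (every finite-dimensional $G$-module has a dual) and with enough injectives, so that $\Ext$ can be computed and the dualization $M \mapsto M^{*}$ is exact. The main obstacle is therefore purely a bookkeeping one: verifying that this adjunction is compatible with the derived functors, i.e. that one may compute $\Ext^{i}_G(M,N)$ via an injective resolution $N \to I^{\bullet}$ and that $M^{*}\otimes I^{\bullet}$ is then an injective resolution of $M^{*}\otimes N$—this is where faithful flatness of $k[G]$ (equivalently, that $- \otimes k[G]$ preserves injectivity, hence $M^{*}\otimes (V\otimes k[G]) \cong (M^{*}\otimes V)\otimes k[G]$ is injective) does the work. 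Since the excerpt labels this lemma "well-known," in the write-up I would simply cite this tensor-identity for $\Ext$ in $\Rep(G)$ (e.g. from Jantzen's book in the classical case, adapted to the super setting) and deduce semisimplicity in one line, rather than reproving the homological foundations.
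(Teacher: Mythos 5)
Your argument is essentially correct, and it is the standard one that the paper implicitly appeals to when it calls \cref{proj_k} ``well-known'' and omits the proof: the forward direction is trivial, and the converse rests on the tensor--hom adjunction $\Hom_G(M,N)\cong\Hom_G(k,M^*\otimes N)$ together with exactness of $\Hom_G(k,-)=(-)^G$ when $k$ is projective. Two wrinkles in your write-up deserve attention. First, the excursion about $\Rep(G)$ having enough projectives is both unnecessary and, in the context of this paper, circular: you justify it by reductivity of $G_0$, but the lemma is used precisely to \emph{prove} that $G_0$ is reductive. Fortunately nothing in your final argument needs projectives: the comodule category has enough injectives (cofree comodules $V\otimes k[G]$ are injective), so $\Ext^i_G(k,-)$ is the $i$-th right derived functor of $(-)^G$, and exactness of $(-)^G$ (i.e.\ projectivity of $k$) already forces $R^i(-)^G=0$ for $i>0$; the compatibility of the adjunction with injective resolutions is exactly the point you note about $M^*\otimes I$ being injective for $I$ injective. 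Second, the adjunction requires $M$ to be dualizable, i.e.\ finite-dimensional, so what you directly obtain is $\Ext^1_G(M,N)=0$ for $M$ finite-dimensional and $N$ arbitrary; to conclude that all of $\Rep(G)$ (rational, possibly infinite-dimensional modules) is semisimple you should add the one-line remark that every rational module is the union of its finite-dimensional submodules, hence a sum of simples once all finite-dimensional modules are semisimple. With the false start in the middle of your text deleted and these two points tidied up, the proof is complete and matches the standard reference argument.
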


%

\subsection{Coordinate ring of $G$} Write $k[G]$ for the coordinate ring of functions on $G$, a supercommutative Hopf superalgebra.  We will view $k[G]$ as a $G$-module according the action by left translation, or as a $G\times G$-module according to the actions by left and right translation.  Recall that as a $G$-module we have $k[G]\cong\Ind_{G_0}^{G}k[G_0]$, i$.$e$.$ we have an identification of superalgebras (see \cite{koszul1982graded})
\[
k[G]=\Hom_{\g_{\ol{0}}}(\UU\g,k[G_0]).
\]
Further, this identification is $G_0$-equivariant, in the senses that it realizes $k[G]$ as $S(\g_{\ol{1}}^*)\otimes k[G_0]$ as a $G_0$-module.  In particular, $k[G_0]$ is a $G_0$-stable subalgebra of $k[G]$.  

\begin{lemma}\label{k_splits_k[G]_lemma}
	The following are equivalent:
	\begin{enumerate}
		\item The $G$-submodule $k\cdot 1$ splits off of $k[G]$.
		\item The $G\times G$ submodule $k\cdot 1$ splits off of $k[G]$.
		\item $k$ is projective as a $G$-module.  
	\end{enumerate}
\end{lemma}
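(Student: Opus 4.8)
The plan is to prove the chain of implications $(2)\Rightarrow(1)\Rightarrow(3)\Rightarrow(2)$, using \cref{proj_k} and \cref{k_splits_k[G]_lemma}'s setup together with the fact that $k[G]$ is an injective $G$-module (being a cofree comodule over the Hopf superalgebra $k[G]$, as in the standard Hochschild-cohomology-of-affine-group-schemes argument). First, $(2)\Rightarrow(1)$ is immediate: a $G\times G$-equivariant splitting is in particular $G$-equivariant for, say, the left-translation action, so $k\cdot 1$ splits off as a $G$-submodule.

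Next, $(1)\Rightarrow(3)$: suppose $k\cdot 1$ splits off $k[G]$ as a $G$-module, so $k[G]\cong k\oplus M$ for a $G$-submodule $M$. Since $k[G]$ is injective as a $G$-module, any direct summand of it is injective, hence $k$ is injective. But in $\Rep(G)$ with $k$ of characteristic zero — or more precisely, because $\Rep(G)$ has enough injectives and projectives and a duality $V\mapsto V^*$ — injectivity of $k$ is equivalent to projectivity of $k$ (apply $(-)^*$, noting $k^*\cong k$). Hence $k$ is projective, which is $(3)$.

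For $(3)\Rightarrow(2)$, the key point is to produce a $G\times G$-equivariant splitting. If $k$ is projective, then by \cref{proj_k} the whole category $\Rep(G)$ is semisimple; in particular $\Rep(G\times G)$ is semisimple (a tensor product of semisimple categories of the relevant type), so \emph{every} $G\times G$-submodule of \emph{every} $G\times G$-module splits off — in particular $k\cdot 1\sub k[G]$ does. Alternatively, and more self-containedly, one observes that the counit $\epsilon\colon k[G]\to k$ is a map of $G\times G$-modules (left and right translation both commute with evaluation at the identity in the appropriate sense), splitting the unit $k\cdot 1\embedsto k[G]$ once we know $k$ is projective — one lifts $\id_k$ through $\epsilon$ along the inclusion, or dually splits $\epsilon$ using injectivity of $k$.

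The main obstacle is the passage $(3)\Rightarrow(2)$, i.e.\ upgrading a bare $G$-projectivity statement to a genuinely $G\times G$-equivariant splitting of $k[G]$; the cleanest route is to invoke semisimplicity of $\Rep(G)$ (via \cref{proj_k}) and hence of $\Rep(G\times G)$, but one should be slightly careful that semisimplicity of $\Rep(G)$ does pass to $\Rep(G\times G)$ in the super setting — this follows since $G\times G$ is again a connected affine algebraic supergroup and $k[G\times G]\cong k[G]\otimes k[G]$, so $k\boxtimes k\cong k$ remains projective over $G\times G$, and one applies \cref{proj_k} to $G\times G$. I would also double-check that \cref{proj_k} is stated for arbitrary connected affine algebraic supergroups (it is), so no extra hypothesis is needed.
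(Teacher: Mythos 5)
Your implications $(2)\Rightarrow(1)$ and $(1)\Rightarrow(3)$ are fine and coincide with the paper's argument: $k[G]$ is injective as a comodule over itself, a direct summand of an injective is injective, and duality turns injectivity of $k$ into projectivity. The genuine problem is in $(3)\Rightarrow(2)$. Your ``more self-contained'' variant is wrong: the counit $\epsilon\colon k[G]\to k$, $f\mapsto f(e)$, is \emph{not} a map of $G\times G$-modules for the left/right translation actions --- equivariance would require $f(g^{-1}h)=f(e)$ for all $f,g,h$, which fails for every nontrivial $G$ (the counit is equivariant only for the conjugation action of the diagonal copy of $G$). In comodule language, a $G$-equivariant map $k[G]\to k$ is precisely a left-invariant integral, and $\epsilon$ is not one; so this route does not produce the required $G\times G$-equivariant splitting.

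Your main variant reduces $(3)\Rightarrow(2)$ to semisimplicity of $\Rep(G\times G)$, but the justification offered --- ``$k[G\times G]\cong k[G]\otimes k[G]$, so $k$ remains projective over $G\times G$'' --- simply restates what has to be shown; the isomorphism of coordinate rings does not by itself transfer projectivity of $k$ from $G$ to $G\times G$. The fact is true, but it needs an argument, for instance: by $(1)\Leftrightarrow(3)$ you already have a $G$-equivariant projection $\lambda\colon k[G]\to k\cdot 1$, i.e.\ a normalized invariant functional, and then $\lambda\otimes\lambda$ is a normalized invariant functional on $k[G\times G]=k[G]\otimes k[G]$, which splits $k\cdot 1$ off $k[G\times G]$ and hence, by the argument of $(1)\Rightarrow(3)$ applied to $G\times G$, makes $k$ projective over $G\times G$. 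Alternatively you can avoid $G\times G$ entirely: since $\Rep(G)$ is semisimple by \cref{proj_k}, project $k[G]$ onto the trivial isotypic component of the left-translation action (this component is exactly $k\cdot 1$, as a left-invariant function on a group scheme is constant); right translations commute with left translations and therefore preserve the isotypic decomposition, so this canonical projection is automatically equivariant for both actions, which is exactly $(2)$. With either repair your proof is complete and follows the same route as the paper's, which likewise deduces $(2)\Leftrightarrow(3)$ from $(1)\Leftrightarrow(3)$.
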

\begin{proof}
Recall that a $G$-module is the same as a $k[G]$-comodule.  In the category of $k[G]$-comodules, $k[G]$ is injective, and thus if $k\cdot 1$ splits off it must be injective as well.  By duality, $k$ will also be projective.  Now $(1)\iff(3)$ follow and from this it is easy to prove $(2)\iff(3)$. 
\end{proof}

\begin{lemma}
	If $\Rep(G)$ is semisimple, then $G_0$ is reductive.
\end{lemma}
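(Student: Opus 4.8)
The plan is to deduce reductivity of $G_0$ from the semisimplicity of $\Rep(G)$ by restricting an appropriate $G$-projective object to $G_0$ and using the explicit description of $k[G]$ as a $G_0$-module. First I would invoke \cref{proj_k} and \cref{k_splits_k[G]_lemma}: since $\Rep(G)$ is semisimple, the trivial module $k$ is projective, hence $k\cdot 1$ splits off of $k[G]$ as a $G$-module, and in particular as a $G_0$-module. Now restrict everything to $G_0$. Using the identification $k[G]\cong S(\g_{\ol 1}^*)\otimes k[G_0]$ of $G_0$-modules, with $k[G_0]$ sitting inside as the degree-zero part $S^0(\g_{\ol 1}^*)\otimes k[G_0]$, I would argue that the splitting of $k\cdot 1$ off $k[G]$ over $G_0$ restricts (or can be arranged to restrict) to a splitting of $k\cdot 1$ off the subalgebra $k[G_0]$. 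The cleanest way: $k[G]$ is a $G_0$-module filtered (or graded) by the $S^{\ge 1}(\g_{\ol 1}^*)$-part, and $k[G_0]$ is the quotient by the ideal generated by $S^{\ge 1}(\g_{\ol 1}^*)$; since $1\in k[G_0]$, a $G_0$-equivariant projection $k[G]\to k\cdot 1$ composed with the inclusion $k[G_0]\hookrightarrow k[G]$ gives a $G_0$-equivariant projection $k[G_0]\to k\cdot 1$, i.e. $k\cdot 1$ splits off $k[G_0]$ as a $G_0$-module.

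Having obtained that $k\cdot 1$ splits off $k[G_0]$ as a $G_0$-module, I would then apply the classical $($purely even$)$ analogue of \cref{k_splits_k[G]_lemma} and \cref{proj_k}: for an affine algebraic group $G_0$, the trivial module splitting off $k[G_0]$ is equivalent to $k$ being projective in $\Rep(G_0)$, which in turn is equivalent to $\Rep(G_0)$ being semisimple, which by Haboush/Weyl-type reductivity criteria in characteristic zero is equivalent to $G_0$ being reductive. Concretely: a $G_0$-equivariant splitting of $k\cdot 1 \hookrightarrow k[G_0]$ is exactly a left-invariant integral on $G_0$ $($a Reynolds operator$)$, and existence of such forces linear reductivity in characteristic zero.

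The main obstacle I anticipate is the step where one passes from a $G_0$-equivariant projection $k[G]\twoheadrightarrow k\cdot 1$ to one on $k[G_0]$. A priori the projection need not respect the decomposition $S(\g_{\ol 1}^*)\otimes k[G_0]$, so one cannot naively restrict it. The fix is to use that the decomposition $k[G] = \bigoplus_{d\ge 0} S^d(\g_{\ol 1}^*)\otimes k[G_0]$ is $G_0$-stable $($it is the grading by the adjoint $G_0$-action weights coming from the coaction, or equivalently the filtration by order of the odd derivations$)$, so that $k[G_0] = S^0(\g_{\ol 1}^*)\otimes k[G_0]$ is a $G_0$-submodule and a $G_0$-module direct summand of $k[G]$; composing the inclusion of this summand with the global projection onto $k\cdot 1$ does the job, since $1$ lies in this summand and is sent to itself. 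One should double-check that $k\cdot 1 \subseteq k[G_0]$ under all identifications and that the projection is the identity on $k\cdot 1$; both are immediate from the Hopf-algebra structure $($the counit restricted appropriately$)$. With that in hand the rest is standard.
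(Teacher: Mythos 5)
Your proof is correct and follows essentially the same route as the paper: use \cref{proj_k} and \cref{k_splits_k[G]_lemma} to split $k\cdot 1$ off $k[G]$ as a $G$-module, use that $k[G_0]$ is a $G_0$-stable subalgebra of $k[G]$ containing $1$ to obtain a splitting of $k\cdot 1$ off $k[G_0]$, and then apply the same lemmas in the purely even case to conclude $G_0$ is reductive. The worry about the projection respecting the tensor decomposition is in fact moot: as you note at the end, since $k[G_0]$ is a $G_0$-submodule containing $1$, composing its inclusion with the projection onto $k\cdot 1$ already does the job.
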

\begin{proof}
	By \cref{k_splits_k[G]_lemma}, if $\Rep(G)$ is semisimple then $k\cdot 1$ splits off $k[G]$ as a $G$-module.  However $k[G_0]$ may be realized as a $G_0$-subalgebra of $k[G]$, and thus $k\cdot 1$ splits off of $k[G_0]$.  By \cref{k_splits_k[G]_lemma} we have $k$ is a projective $G_0$-module and thus $\Rep(G_0)$ is semisimple, and therefore $G_0$ is reductive.  
\end{proof}
A supergroup $G$ is called quasireductive if $G_0$ is reductive.  We have thus shown that if $\Rep(G)$ is semisimple, then $G$ is quasireductive.  Quasireductive supergroups and in particular their representation theory were studied in \cite{serganova2011quasireductive} amongst other places.

\subsection{Properties of $\g$}
Recall that a Lie superalgebra $\g$ is quasireductive if $\g_{\ol{0}}$ is reductive and $\g_{\ol{1}}$ is a semisimple $\g_{\ol{0}}$-module.  In particular the Lie superalgebra of a quasireductive Lie supergroup is itself quasireductive.
\begin{lemma}\label{product_of_simples}
	If $\Rep(G)$ is semisimple, then $\g=\operatorname{Lie} G$ is a product of its center and simple quasireductive Lie superalgebras.  Further, its center is purely even.
\end{lemma}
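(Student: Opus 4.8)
The plan is to decompose $\g$ using the fact that $\Rep(G)$ is semisimple, which forces strong structural constraints. First, since $\Rep(G)$ semisimple implies $G_0$ is reductive (by the previous lemma), $\g_{\ol 0}$ is a reductive Lie algebra and $\g_{\ol 1}$ is a semisimple $\g_{\ol 0}$-module; moreover, since the adjoint representation of $G$ on $\g$ lies in $\Rep(G)$, we have that $\g$ itself is a semisimple object of $\Rep(G)$, so $\g$ decomposes as a direct sum of simple $G$-submodules, in particular of $\g$-submodules. I would first isolate the center $\mathfrak{z}(\g)$: it is a $G$-submodule, hence by semisimplicity of the adjoint module there is a $\g$-stable (indeed $G$-stable) complement $\mathfrak{a}$, and one checks $[\mathfrak{a},\mathfrak{a}]\subseteq\mathfrak{a}$ since $\mathfrak a$ is an ideal, so $\g=\mathfrak z(\g)\oplus\mathfrak a$ as Lie superalgebras. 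Then I would argue $\mathfrak a$ is a direct product of simple Lie superalgebras: since the adjoint action of $\mathfrak a$ on itself is semisimple, $\mathfrak a$ has no nonzero solvable ideals (a minimal abelian ideal would be a direct summand and central in $\mathfrak a$, contradicting that its centralizer complement gives a splitting forcing it into the center), so $\mathfrak a$ is semisimple as a Lie superalgebra, and a semisimple Lie superalgebra whose adjoint representation is completely reducible is a direct product of simple ideals (the socle decomposition: take a minimal ideal $\mathfrak s$, split off its centralizer, induct).

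Next I must show the center is purely even. Suppose $u\in\mathfrak z(\g)_{\ol 1}$ is a nonzero odd central element. Then $u^2=[u,u]/2\in\mathfrak z(\g)_{\ol 0}$ is a central even element, and since $G_0$ is reductive with $\g_{\ol 0}$ reductive, every element of $\mathfrak z(\g)_{\ol 0}\subseteq\mathfrak z(\g_{\ol 0})$ is semisimple in $\g_{\ol 0}$ — so in fact $u\in\g_{\ol 1}^{ss}$. Now I would derive a contradiction directly, without invoking \cref{main_redn_thm} (which has not yet been proved), by exhibiting a non-split extension in $\Rep(G)$. The cleanest route: the two-dimensional Lie superalgebra $k u\oplus k u^2$ (a quotient of the relevant minuscule object) has a $3$-dimensional indecomposable non-semisimple module — take a basis $\{v_0,v_1,v_2\}$ with $u\cdot v_0=v_1$, $u\cdot v_1 = \tfrac12 u^2\cdot v_0$, and arrange $u^2$ to act with a single Jordan block on the $u$-invariants appropriately — and since $u,u^2$ are central in $\g$ this module structure extends to all of $\g$ by letting $\g_{\ol 0}$ act via a character and everything else act as the corresponding scalars/zero; checking $G_0$-integrability is automatic because $\g_{\ol 0}$ acts through a one-dimensional (hence integrable, as $G_0$ is reductive so its characters integrate) representation. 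This produces a non-semisimple object of $\Rep(G)$, a contradiction. Alternatively, and perhaps more in the spirit of the paper, one can observe $k[G]$ restricted along $u$ already fails to split, using the identification $k[G]=S(\g_{\ol 1}^*)\otimes k[G_0]$ and tracking the differential.

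The main obstacle I anticipate is the purely-even-center step: the decomposition into center plus product of simples is a fairly standard Lie-superalgebra fact given complete reducibility of the adjoint action, but ruling out odd central elements genuinely uses that $\Rep(G)$ is semisimple in an essential way (an odd central element $u$ with $u^2$ semisimple is exactly an obstruction of the type classified later), and one must be careful to make the argument self-contained — i.e. not circularly invoke \cref{main_redn_thm}. So the real content is constructing, by hand, the small non-semisimple representation attached to a hypothetical odd central element (equivalently, a nonzero class in $\operatorname{Ext}^1_{\Rep G}(k,k)$ or $\operatorname{Ext}^1_{\Rep G}(k,\text{character})$ produced by $u$), and verifying it integrates to $G_0$. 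A secondary, more routine point is confirming that the Lie-superalgebra splitting $\g = \mathfrak z(\g)\oplus\mathfrak a$ coming from the $G$-module splitting is compatible with brackets, which follows since both summands are ideals and the bracket of two ideals lands in their intersection.
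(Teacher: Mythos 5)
Your first half is correct: using semisimplicity of the adjoint representation to split off $\mathfrak{z}(\g)$, then decomposing the complementary ideal into minimal (hence simple) ideals, each quasireductive because $\g_{\ol{0}}$ is reductive and $\g_{\ol{1}}$ is a semisimple $\g_{\ol{0}}$-module, is exactly what the paper compresses into ``follows immediately from semisimplicity and the reductivity of $G_0$.''

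The gap is in the purely-even-center step, and it is the key step. If $u\in\mathfrak{z}(\g)_{\ol{1}}$ is central then $[u,u]=0$, so $u^2=0$; you never record this, and your construction is incompatible with it. Your proposed three-dimensional module requires $u^2$ to ``act with a single Jordan block,'' but $u^2$ is the zero element of $\g$ and must act by zero in any representation; moreover, even in the non-central situation you are implicitly importing (a nonzero $u^2$ semisimple in $\g_{\ol{0}}$), a semisimple element of $\operatorname{Lie}(G_0)$ acts semisimply on every rational $G_0$-module, so a module on which $u^2$ has a nontrivial Jordan block can never be an object of $\Rep(G)$. So the non-semisimple object you want does not exist as described, and the contradiction is not reached. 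The fix is simpler than what you attempt, and is the paper's one-line argument: since $u^2=0$, the line $ku$ is a copy of $\operatorname{Lie}\mathbb{G}_a^{0|1}$, and it has an ideal complement (a vector-space complement to $ku$ inside the abelian superalgebra $\mathfrak{z}(\g)$, plus the sum of the simple factors), so $G$ admits $\mathbb{G}_a^{0|1}$ as a quotient. Equivalently and concretely, pull back the two-dimensional module $V=kv_0\oplus kv_1$ ($v_0$ even, $v_1$ odd, $u\cdot v_0=v_1$, $u\cdot v_1=0$, the complementary ideal acting by zero) with trivial $G_0$-action; $G_0$-equivariance is automatic because $G_0$ is connected and $\operatorname{ad}(\g_{\ol{0}})u=0$ forces $\operatorname{Ad}(G_0)u=u$. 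This is a non-split self-extension of the trivial module (up to parity) in $\Rep(G)$, contradicting semisimplicity. Your preliminary observation that $u\in\g_{\ol{1}}^{ss}$ is then an unnecessary detour.
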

\begin{proof}
	The first statement follows immediately from semisimplicity and the reductivity of $G_0$.  For the second statement, if the center had a non-trivial odd component then $G$ would admit $\mathbb{G}_{a}^{0|1}$ as a quotient, whose representation theory is not semisimple.
\end{proof}

\subsection{The Lie supergroup $OSp(1|2n)$}\label{osp(1|2n)_subsec}

We define the Lie supergroup $OSp(1|2n)$ in terms of a corresponding super Harish-Chandra pair.  First, let $\o\s\p(1|2n)$ be the Lie superalgebra with $\o\s\p(1|2n)_{\ol{0}}=\s\p(2n)$ and $\o\s\p(1|2n)_{\ol{1}}=k^{2n}$, the standard representation of $\s\p(2n)$.  Write $(-,-)$ for an $\s\p(2n)$-invariant symplectic form on $k^{2n}$. Then the odd Lie bracket $[-,-]:S^2k^{2n}\to\s\p(2n)$ is given by
\[
[u,v](w)=(u,w)v+(v,w)u
\]
where we view $\s\p(2n)$ as a subspace of $\g\l(k^{2n})$.  Notice that in this way $[-,-]:S^2k^{2n}\to\s\p(2n)$ defines an isomorphism, and that any other $\s\p(2n)$-equivariant map between these spaces differs from it by a scalar.  

Now consider the super Harish-Chandra pair $(Sp(2n),\o\s\p(1|2n))$, where $Sp(2n)$ has the natural action on $\o\s\p(1|2n)$.  This forms a super Harish-Chandra pair, and thus defines a (quasireductive) Lie supergroup, which we call $OSp(1|2n)$.  As stated in the introduction, this supergroup is perhaps better denoted $SOSP(1|2n)$, but we drop the first $S$ for aesthetic purposes.

\begin{lemma}\label{osp_no_covers_etc}
	$OSp(1|2n)$ is the unique connected supergroup $G$ with $\operatorname{Lie} G\cong\o\s\p(1|2n)$.  Thus, for positive integers $n_1,\dots,n_k$, $OSp(1|2n_1)\times\cdots\times OSp(1|2n_k)$ is the unique connected supergroup with Lie superalgebra $\o\s\p(1|2n_1)\times\cdots\times\o\s\p(1|2n_k)$.  
\end{lemma}
\begin{proof}
Suppose $G$ is a connected supergroup with $Lie G\cong\o\s\p(1|2n)$.  Then since $Sp(2n)$ is simply connected, $G_0$ must be a quotient of $Sp(2n)$ by a finite central subgroup.  However, the standard representation $k^{2n}$ of $Sp(2n)$ is faithful, and must descend to a representation of $G_0$.  Thus we are forced to have $G_0\cong Sp(2n)$.  The statement now follows.
\end{proof}

\subsection{A sufficient criteria}
Define
\[
\g_{\ol{1}}^{ss}=\{u\in\g_{\ol{1}}:u^2\text{ is semisimple in }\g_{\ol{0}}\}.
\]
The following proof uses the ideas of Proposition 3.2 and Theorem 4.1 of \cite{djokovic1976semisimplicity}.  Note that one could easily use the classification of simple Lie superalgebras, but we use a simpler, more direct proof.
\begin{prop}\label{sufficient_osp(1|2n)}
	Suppose that $\g$ is a simple quasireductive Lie superalgebra with $\g_{\ol{1}}\neq0$, such that $\g_{\ol{1}}^{ss}=\{0\}$.  Then $\g\cong\o\s\p(1|2n)$.
\end{prop}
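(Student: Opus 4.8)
The plan is to leverage the hypothesis $\g_{\ol 1}^{ss}=\{0\}$ to force strong constraints on both the even part $\g_{\ol 0}$ and the bracket $S^2\g_{\ol 1}\to\g_{\ol 0}$, and then recognize $\o\s\p(1|2n)$ from those constraints. First I would observe that since $\g$ is simple and quasireductive, $\g_{\ol 0}$ is reductive and $\g_{\ol 1}$ is a semisimple $\g_{\ol 0}$-module; moreover the bracket $[\cdot,\cdot]\colon S^2\g_{\ol 1}\to\g_{\ol 0}$ is nonzero (else $\g_{\ol 1}$ would be an abelian ideal), and its image is an ideal of $\g_{\ol 0}$ whose $\g_{\ol 1}$-translates generate $\g$; by simplicity the image actually spans $[\g_{\ol 1},\g_{\ol 1}]$ and $\g_{\ol 0}=[\g_{\ol 1},\g_{\ol 1}]$. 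The key dichotomy comes from the hypothesis: for any $u\in\g_{\ol 1}$, $u^2=[u,u]/2$ has a Jordan decomposition $u^2=s+n$ in $\g_{\ol 0}$; the condition $\g_{\ol 1}^{ss}=\{0\}$ says that whenever $n=0$ we must have $u=0$. Equivalently, every nonzero $u\in\g_{\ol 1}$ has $u^2$ non-semisimple, in particular $u^2\neq 0$, so the bracket form $S^2\g_{\ol 1}\to\g_{\ol 0}$ is "anisotropic" in a strong sense.

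Next I would exploit the $\s\l_2$- (really $\o\s\p(1|2)$-) theory: for $0\neq u\in\g_{\ol 1}$, write $u^2=s+n$ with $n\neq 0$ nilpotent, $[s,n]=0$. The element $n\in\g_{\ol 0}$ is a nonzero nilpotent, so by Jacobson–Morozov it embeds in an $\s\l_2$-triple $(e,h,f)$ with $e=n$. The idea (this is the content of Djoković–Hochschild's Prop. 3.2 / Thm. 4.1, which the proposition says to imitate) is to analyze the $\g_{\ol 1}$-isotypic structure under a suitable subalgebra and produce, by $\s\l_2$-representation theory on $\g_{\ol 1}$, an element $u'\in\g_{\ol 1}$ with $(u')^2$ nilpotent but nonzero — contradicting $\g_{\ol 1}^{ss}=\{0\}$ — unless $\g_{\ol 0}$ is forced to act on $\g_{\ol 1}$ through a single $\s\p(2n)$ with $\g_{\ol 1}$ its standard module. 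Concretely: one shows $\g_{\ol 0}$ has no simple ideal acting on $\g_{\ol 1}$ by a representation that would admit, via a principal or subprincipal $\s\l_2$, a nonzero $u$ with $u^2$ nilpotent; running through the list of how a simple Lie algebra can act on a symplectic module (the bracket lands in $\g_{\ol 0}$, forcing $\g_{\ol 1}$ to carry an invariant symmetric pairing with values in $\g_{\ol 0}$, hence each simple factor of $\g_{\ol 0}$ sees $\g_{\ol 1}$ as a symplectic self-dual representation) one eliminates everything except $\s\p(2n)$ on $k^{2n}$. Then $\g_{\ol 0}=[\g_{\ol 1},\g_{\ol 1}]$ must be simple equal to $\s\p(2n)$, and the bracket $S^2 k^{2n}\to\s\p(2n)$, being $\s\p(2n)$-equivariant and nonzero, is a nonzero scalar multiple of the standard one — giving $\g\cong\o\s\p(1|2n)$.

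The main obstacle is the middle step: ruling out all possibilities for the $\g_{\ol 0}$-module structure on $\g_{\ol 1}$ except the standard representation of a single $\s\p(2n)$, using only the hypothesis that no nonzero $u\in\g_{\ol 1}$ has $u^2$ semisimple. The cleanest route is probably: (i) reduce to $\g_{\ol 0}$ simple by showing a product decomposition of $\g_{\ol 0}$ would split $\g_{\ol 1}$ compatibly and let one build a "mixed" $u$ with semisimple square; (ii) since the bracket gives a nondegenerate $\g_{\ol 0}$-invariant symmetric form $\g_{\ol 1}\times\g_{\ol 1}\to\g_{\ol 0}$ composed with the Killing form, $\g_{\ol 1}$ is an orthogonal-type self-dual $\g_{\ol 0}$-module, so up to the symplectic sign it is self-dual; (iii) pick a nonzero nilpotent in the image of the bracket, build its $\s\l_2$-triple, and use the weight-space decomposition of $\g_{\ol 1}$ together with the explicit formula $[u,v](w)=(u,w)v+(v,w)u$-type identities to show that unless $\g_{\ol 1}$ is the standard $\s\p(2n)$-module there is an $\s\l_2$-weight vector $u\in\g_{\ol 1}$ with $u^2$ a nonzero nilpotent — or even $u^2=0$ with $u\neq0$ — contradicting the hypothesis. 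I expect steps (i) and (iii) to require the genuine work; the rest is bookkeeping with the Jordan decomposition in $\g_{\ol 0}$ and $\s\l_2$-representation theory.
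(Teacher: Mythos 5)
There is a genuine gap here, together with a logical slip that matters. Twice you claim that producing a nonzero $u'\in\g_{\ol 1}$ with $(u')^2$ nilpotent and nonzero would contradict $\g_{\ol 1}^{ss}=\{0\}$. It would not: the hypothesis only forbids nonzero $u$ with $u^2$ \emph{semisimple}, so the only usable contradictions are $u'\neq 0$ with $(u')^2=0$, or with $(u')^2$ semisimple and nonzero. Since your elimination strategy (step (iii)) consists precisely of manufacturing such a contradiction from Jacobson--Morozov and $\s\l_2$-weight considerations, this confusion is not cosmetic. More importantly, the step you yourself label ``the genuine work'' --- ruling out every $\g_{\ol 0}$-module structure on $\g_{\ol 1}$ other than the standard module of a single $\s\p(2n)$ --- is exactly the content of the proposition and is nowhere carried out, so as written this is a plan rather than a proof. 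Step (ii) is also shaky: the $\g_{\ol 0}$-valued bracket on $S^2\g_{\ol 1}$ cannot simply be ``composed with the Killing form'' to yield a scalar form on $\g_{\ol 1}$, and a simple quasireductive Lie superalgebra need not carry an invariant even form at all (e.g. $\p(n)$, $\q(n)$), so self-duality of $\g_{\ol 1}$ needs a different argument; step (i) is likewise only asserted.

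For comparison, the paper's proof needs none of the Jacobson--Morozov machinery: it works with a Cartan subalgebra $\mathfrak{t}\sub\g_{\ol 0}$ and weight vectors. If $u\in\g_{\ol 1}$ is a nonzero weight vector of weight $\alpha$, then $\alpha\neq0$ (else $u^2\in\mathfrak{t}$ is semisimple) and $u^2\neq0$ (else $u^2=0$ is semisimple), so $2\alpha$ is a root of $\g_{\ol 0}$ divisible by two; by the classification of irreducible root systems this forces a long root of a type $C_n$ factor, with $\alpha$ a weight of its standard module. Moreover $\dim\g_\alpha=1$, since $v\mapsto v^2$ maps $\g_\alpha$ into the one-dimensional space $\g_{2\alpha}$ and a nonzero quadratic form in two or more variables over an algebraically closed field has a nonzero null vector, which the hypothesis forbids. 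Hence $\g_{\ol 1}$ is a multiplicity-free sum of standard $\s\p$-modules, distinct summands commute, so for any summand $V$ the subspace $\s\p(V)\oplus V$ is an ideal, equal to $\g$ by simplicity; finally $[-,-]:S^2V\to\s\p(V)$ must be an isomorphism (otherwise $V$ is an abelian ideal), giving $\g\cong\o\s\p(1|2n)$. If you want to repair your proposal, this weight-space argument is the concrete substitute for your step (iii).
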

\begin{proof}
	Choose a Cartan subalgebra $\mathfrak{t}\sub\g_{\ol{0}}$, which gives rise to a root decomposition on $\g$.  Let $u\in\g_{\ol{1}}$ be a non-zero root vector of weight $\alpha$.  Note that $\alpha\neq0$, else $u^2\in\mathfrak{t}$, a contradiction.  Then $u^2$ is of weight $2\alpha$, and must be non-zero of course, else $u^2$ is semisimple.  It follows that $2\alpha$ is a root of $\g_{\ol{0}}$ and is divisible by $2$ as a root.  By the classification of irreducible root systems, $2\alpha$ must belong to a root system of type $C_n$, and $2\alpha$ is a long root.  Further, if $\dim\g_{\alpha}>1$ then the map $[-,-]:S^2\g_{\alpha}\to\g_{2\alpha}$ must give rise to some non-zero $v\in\g_{\alpha}$ such that $v^2=0$, since $\dim\g_{2\alpha}=1$.  Thus by our assumption, $\dim\g_{\alpha}=1$.
  
  So far we have shown that $\g_{\ol{1}}$ is a multiplicity-free sum of the standard module of different copies of $\s\p$ in $\g_{\ol{0}}$.  Let $V_1,V_2$ be distinct irreducible summands in $\g_{\ol{1}}$.  Then since $[-,-]:V_1\otimes V_2\to\g_{\ol{0}}$ is $\g_{\ol{0}}$-equivariant, it must be 0, and thus $V_1$ and $V_2$ commute.  Let $V\sub\g_{\ol{1}}$ be an irreducible summand, and let $\s\p(V)\sub\g_{\ol{0}}$ be the Lie superalgebra acting on it non-trivially.  Let $\k=\s\p(V)\oplus V$.  Then from what we have shown, it follows that $\k$ is an ideal of $\g$.  Therefore $\k=\g$.
  
  Now $S^2V\cong\s\p(V)$, and thus either $[-,-]:S^2V\to\s\p(V)$ is an isomorphism or is the zero map.  If it is the zero map, then $\g_{\ol{1}}=V$ defines an abelian ideal of $\g$, a contradiction, so instead this map must be an isomorphism.  It is now easy to prove that $\g\cong\o\s\p(1|2n)$.  
\end{proof}

\begin{thm}\label{thm_for_redn}
	Suppose that $G$ is a quasireductive Lie supergroup such that $\Rep(G)$ is semisimple, and $\g_{\ol{1}}^{ss}=\{0\}$.  Then there exists a reductive even subgroup $K$ of $G$, and positive integers $n_1,\dots,n_k$ such that
	\[
	G\cong K\times OSp(1|2n_1)\times\cdots\times OSp(1|2n_k).
	\]
\end{thm}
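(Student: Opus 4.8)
The plan is to promote the Lie-superalgebra structure theory already established to a statement about $G$ itself, using the super Harish--Chandra pair formalism. Since $\Rep(G)$ is semisimple, \cref{product_of_simples} lets me write $\g=\mathfrak{z}\times\g^{(1)}\times\cdots\times\g^{(m)}$, where $\mathfrak{z}$ is the purely even center and each $\g^{(i)}$ is simple quasireductive. The key observation is that the hypothesis $\g_{\ol{1}}^{ss}=\{0\}$ passes to the factors: for $u\in\g^{(i)}_{\ol{1}}\subseteq\g_{\ol{1}}$ we have $u^2\in\g^{(i)}_{\ol{0}}$, and because $\g_{\ol{0}}$ is reductive with $\g^{(i)}_{\ol{0}}$ a direct-summand ideal, $\operatorname{ad}_{u^2}$ is a semisimple operator on $\g_{\ol{0}}$ if and only if it is one on $\g^{(i)}_{\ol{0}}$; hence $(\g^{(i)})_{\ol{1}}^{ss}=\{0\}$. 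Therefore each $\g^{(i)}$ is either purely even, i.e. an ordinary simple Lie algebra, or has $\g^{(i)}_{\ol{1}}\neq0$ and so, by \cref{sufficient_osp(1|2n)}, is isomorphic to $\o\s\p(1|2n_i)$ with $n_i\geq1$. Regrouping the summands I obtain $\g=\l\times\h$, where $\h=\o\s\p(1|2n_1)\times\cdots\times\o\s\p(1|2n_k)$ collects the genuinely super factors while $\l$ (the center together with the even simple factors) is a purely even reductive Lie algebra; both are ideals and $[\l,\h]=0$.

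Next I would realize $\l$ and $\h$ as Lie algebras of subgroups of $G$. As $\l\subseteq\g_{\ol{0}}=\operatorname{Lie}G_0$ is reductive, hence algebraic, let $K\subseteq G_0$ be the connected closed subgroup with $\operatorname{Lie}K=\l$; its unipotent radical has Lie algebra a nilpotent ideal of $\l$, namely $0$, so $K$ is reductive, and it is even. Let $M\subseteq G_0$ be the connected subgroup with $\operatorname{Lie}M=\h_{\ol{0}}$; since $\h$ is an ideal and $M$ is connected, $\Ad(M)$ preserves $\h$, so $(M,\h)$ is a sub-super-Harish--Chandra pair of $(G_0,\g)$ and defines a connected sub-supergroup $H\subseteq G$ with $\operatorname{Lie}H=\h$. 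By \cref{osp_no_covers_etc}, $H\cong OSp(1|2n_1)\times\cdots\times OSp(1|2n_k)$.

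Finally I would assemble the product. The subgroups $K$ and $H$ commute in $G$: $[\l,\h_{\ol{0}}]=0$ forces $K$ and $M$ to commute in $G_0$, and $[\l,\h]=0$ together with connectedness of $K$ forces $\Ad(K)$ to act trivially on $\h=\operatorname{Lie}H$. Moreover $K\cap H$ has Lie algebra $\l\cap\h=0$, so it is finite, and being normal in the connected group $H$ it is central in $H$; but $Z(H)$ is trivial, since a central element of $OSp(1|2n)$ lies in $Z(Sp(2n))=\{\pm I\}$ while $-I$ acts on $\o\s\p(1|2n)_{\ol{1}}$ by the scalar $-1$ and so is not central. Hence $K\cap H=\{1\}$, and multiplication defines a homomorphism $\mu\colon K\times H\to G$ that is injective (commuting subgroups meeting trivially) and induces the tautological isomorphism $\l\oplus\h\xrightarrow{\sim}\g$ on Lie superalgebras; since $G$ is connected, $\mu$ is then surjective (its image is open), so $\mu$ is an isomorphism and $G\cong K\times OSp(1|2n_1)\times\cdots\times OSp(1|2n_k)$. (If $\g_{\ol{1}}=0$ this degenerates to $G=G_0=K$ reductive.)

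The conceptual content of this argument sits entirely in the three imported inputs — \cref{product_of_simples}, \cref{sufficient_osp(1|2n)}, and \cref{osp_no_covers_etc} — and the only genuine obstacle in the assembly is the familiar ``no central extension'' bookkeeping: upgrading the splitting $\g=\l\times\h$ of Lie superalgebras to a splitting of the supergroup $G$ requires ruling out a finite central kernel, which is exactly what the computation $Z(OSp(1|2n))=\{1\}$ (equivalently, faithfulness of the adjoint action of $OSp(1|2n)$ on its odd part) provides, once \cref{osp_no_covers_etc} has been used to identify $H$ in the first place.
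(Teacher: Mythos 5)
Your proposal is correct and takes essentially the same route as the paper: decompose $\g$ via \cref{product_of_simples}, identify the simple factors with nonzero odd part as $\o\s\p(1|2n_i)$ via \cref{sufficient_osp(1|2n)}, integrate the two ideals to subgroups, and kill the finite intersection using faithfulness of the $Sp(2n)$-action on the odd part — which is exactly the computation underlying \cref{osp_no_covers_etc}, the paper's substitute for your explicit $Z(OSp(1|2n))=\{1\}$ argument. One small caveat: ``reductive, hence algebraic'' is not a valid general principle (an irrational line in the Lie algebra of a torus is abelian, hence reductive, but not algebraic); your $\l$ does integrate, but the correct reason is that $\l=\mathfrak{z}(\g_{\ol{0}})\oplus(\text{a sum of simple ideals of }[\g_{\ol{0}},\g_{\ol{0}}])$ with $\mathfrak{z}(\g_{\ol{0}})=\operatorname{Lie}Z(G_0)^{\circ}$ since $G_0$ is reductive in characteristic zero — a point the paper itself also leaves implicit.
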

\begin{proof}
	By \cref{product_of_simples}, we may write $\g=\k\times\g_1\times\cdots\times \g_k$ for a reductive Lie algebra $\k$ and simple quasireductive Lie superalgebras $\g_1,\dots,\g_k$ with nonzero odd part.  By \cref{sufficient_osp(1|2n)}, $\g_i\cong \o\s\p(1|2n_i)$ for some positive integer $n_i$.  Let $\k$ integrate to the connected subgroup $K$ of $G$, and $\g_1\times\cdots\times\g_k$ integrate to the connected subgroup $OSp(1|2n_1)\times\cdots\times OSp(1|2n_k)$ of $G$, where we use \cref{osp_no_covers_etc}.  Thus we have a map
	\[
	K\times OSp(1|2n_1)\times\cdots\times OSp(1|2n_k)\to G
	\]
	which induces an isomorphism of Lie superalgebras. Further, $K_0\cap(OSp(1|2n_1)_0\times\cdots\times OSp(1|2n_k)_0)$ is normal in $OSp(1|2n_1)\times\cdots\times OSp(1|2n_k)$, and so must be trivial by \cref{osp_no_covers_etc}.  Hence we find that
	\[
	K\times OSp(1|2n_1)\times\cdots\times OSp(1|2n_k)\cong G.
	\]
\end{proof}

With \cref{thm_for_redn} proven, it remains to prove the following.

\begin{thm}\label{main_reduced_statement}
	If $\Rep(G)$ is semisimple, then $\g_{\ol{1}}^{ss}=\{0\}$.
\end{thm}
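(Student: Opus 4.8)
The plan is to prove the contrapositive: if $\g_{\ol 1}^{ss}\neq\{0\}$, I will exhibit a non-split extension of the trivial module $k$ in $\Rep(G)$, so that $k$ is not projective and $\Rep(G)$ is not semisimple by \cref{proj_k}. Rather than build such an extension inside $\Rep(G)$ directly, I will locate one in $\Rep(P)$ for a carefully chosen closed sub-supergroup $P\le G$ and then transport it up to $G$. The step that legitimizes the transport — exactness of induction $\Ind_P^G$ — is where geometry enters: I will arrange $P$ so that the quotient $G/P$ is affine.

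Concretely, fix $0\neq u\in\g_{\ol 1}^{ss}$ and set $h:=u^2\in\g_{\ol 0}$. Since the hypothesis already forces $G_0$ reductive, $h$ is a semisimple element of $\g_{\ol 0}$, and since $[h,u]=0$ it lies in, and remains semisimple in, $\operatorname{Lie}\bigl(\operatorname{Stab}_{G_0}(u)\bigr)$; hence $h$ is contained in a maximal torus $Z$ of $\operatorname{Stab}_{G_0}(u)^{\circ}$. Such a $Z$ fixes $u$, so $\bigl(Z,\ \operatorname{Lie}(Z)\oplus ku\bigr)$ is a super Harish-Chandra pair mapping into $(G_0,\g)$ and thus defines a closed sub-supergroup $P\le G$ with $P_0=Z$ and $[u,u]=2h\in\operatorname{Lie}(Z)$. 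Quotienting $P$ by its (normal) even subgroup $Z=P_0$ kills $h$ together with all even directions and leaves $P/Z\cong\mathbb{G}_a^{0|1}$. Now $\Rep(\mathbb{G}_a^{0|1})$ is manifestly not semisimple — its regular representation $k[\mathbb{G}_a^{0|1}]$ is a non-split extension of $k$ by $k$ up to a parity shift — and since $\Rep(P/Z)$ is a full subcategory of $\Rep(P)$ we get $\Ext^1_P(k,\Pi k)\neq 0$, where $\Pi k$ denotes $k$ with reversed parity. On the other hand $Z$ is a torus, hence reductive, so $G_0/Z$ is affine by Matsushima's criterion; affineness being insensitive to the nilpotent odd directions, $G/P$ is affine as well.

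The transfer is then immediate: affineness of $G/P$ makes $\Ind_P^G$ — the right adjoint of the exact functor $\operatorname{Res}^G_P$ — exact, hence injective-preserving, and Shapiro's lemma gives $\Ext^1_G\!\bigl(k,\ \Ind_P^G(\Pi k)\bigr)\cong\Ext^1_P(k,\Pi k)\neq 0$. Thus $k$ is not projective, and $\Rep(G)$ is not semisimple by \cref{proj_k} — the desired contradiction, so $\g_{\ol 1}^{ss}=\{0\}$. Read positively, this realizes the phenomenon flagged in the introduction: a nonzero element of $\g_{\ol 1}^{ss}$ spawns a Duflo--Serganova-type functor, and such a functor cannot exist once $\Rep(G)$ is semisimple.

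I expect the real work to lie in the construction of $P$: checking carefully that a semisimple element of $\g_{\ol 0}$ annihilating $u$ lands in a maximal torus of $\operatorname{Stab}_{G_0}(u)^{\circ}$ which in turn fixes $u$, and that $\bigl(Z,\operatorname{Lie}(Z)\oplus ku\bigr)$ is genuinely a super Harish-Chandra pair. I also need to invoke, with suitable references, the super analogues of two standard facts used above — that $G/H$ is affine whenever $G_0/H_0$ is, and that induction along an affine quotient of supergroups is exact so that Shapiro's lemma applies. These are the load-bearing points; granting them, the homological heart of the argument is a single line.
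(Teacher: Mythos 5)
Your argument is correct in outline and is genuinely different from both proofs in the paper. You localize the problem: from $u\in\g_{\ol{1}}^{ss}$ you build a closed sub-supergroup $P\le G$ with $P_0=Z$ a torus containing $h=u^2$ inside the stabilizer of $u$ (the two key verifications — $[\operatorname{Lie}(Z),u]=0$, so that $(Z,\operatorname{Lie}(Z)\oplus ku)$ is a sub-pair with $P_0$ normal in $P$, and $h\in\operatorname{Lie}(Z)$, so that $P/P_0\cong\mathbb{G}_a^{0|1}$ — both check out; the fact that a semisimple element of $\operatorname{Lie}(\operatorname{Stab}_{G_0}(u))$ is tangent to a torus is the standard characteristic-zero statement about algebraic hulls of semisimple elements, and it correctly covers the case $h=0$ as well), observe that $\Ext^1_P(k,\Pi k)\neq0$ via the regular representation of $\mathbb{G}_a^{0|1}$, and push this up by exact induction along the affine quotient $G/P$ plus Shapiro's lemma, contradicting \cref{proj_k}. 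This is essentially the ``$\g_{\ol{1}}^{ss}=\Hom(\mathbb{M}/\mathbb{G}_a,G)$'' point of view mentioned in the introduction, made into a proof. The paper instead argues directly on $G$ or on $G/G_0$: Proof I produces, from the non-vanishing right-invariant vector field $u_R$ with $(u_R)^2$ semisimple, an explicit $f\in k[G]$ with $uf=1$ (\cref{odd_ss_X_no_splitting_lemma}), so $k\cdot1$ does not split off $k[G]$ (\cref{k_splits_k[G]_lemma}); Proof II shows the invariant ``ghost'' distribution $v\in(\UU\g/\g_{\ol{0}}\UU\g)^{G}$ has $\varepsilon(v)=0$ whenever $\g_{\ol{1}}^{ss}\neq0$, via a Duflo--Serganova-type argument. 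What your route buys is a clean homological mechanism (non-semisimplicity is inherited along a sub-supergroup with affine quotient) that generalizes readily; what it costs is reliance on two nontrivial external inputs that the paper deliberately avoids and that you rightly flag: affineness of $G/P$ being detected on $G_0/P_0$, and vanishing of higher derived induction over an affine quotient in the super setting (both are in the literature, e.g.\ work of Masuoka and Zubkov on quotients of supergroups), whereas the paper's proofs are self-contained and its second proof moreover sets up the criterion $\varepsilon(v)\neq0$ that is reused to verify semisimplicity of $\Rep OSp(1|2n)$.
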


For this we offer two proofs, along with a proof that $\Rep(OSp(1|2n))$ is indeed semisimple.

\section{Proof I: vector fields on $G$}

In light of \cref{k_splits_k[G]_lemma}, in order to prove \cref{main_reduced_statement}, we show the following:
\begin{prop}\label{no_splitting_ss_odd}
	Suppose that $\g_{\ol{1}}^{ss}\neq\{0\}$.  Then $k\cdot 1$ does not split off $k[G]$.
\end{prop}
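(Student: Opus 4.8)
The plan is to prove the contrapositive in sharp form: assuming $k\cdot 1$ splits off $k[G]$, I will produce, for every $0\neq u\in\g_{\ol{1}}^{ss}$, an element $f\in k[G]$ with $u\cdot f=1$, where $u$ acts through the left-translation $G$-module structure on $k[G]$. This suffices: the $G$-equivariant projection $\pi\colon k[G]\to k\cdot 1$ is $\g$-equivariant, so $1=\pi(1)=\pi(u\cdot f)=u\cdot\pi(f)=0$, a contradiction. Moreover, by \cref{k_splits_k[G]_lemma} the splitting hypothesis forces $k$ to be projective, hence (\cref{proj_k}) $\Rep(G)$ semisimple and $G_0$ reductive; this will be used below.

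Fix $0\neq u\in\g_{\ol{1}}^{ss}$ and put $h:=u^2\in\g_{\ol{0}}$, a semisimple element. The super Jacobi identity gives $[h,u]=[u^2,u]=0$. Let $S\subseteq G_0$ be the Zariski closure of the one-parameter subgroup generated by $h$; since $h$ is semisimple this is a torus with $h\in\operatorname{Lie}S$, and since $[h,u]=0$ the element $u$ centralizes $S$. The action of $\g$ on $k[G]$ induced by the left-translation module structure is by right-invariant vector fields; because $u$ and $h$ centralize $S$, the fields $u\cdot(-)$ and $h\cdot(-)$ descend to the homogeneous super space $X:=S\backslash G$, with $k[X]=k[G]^{S}$. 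Let $D$ be the descent of $u\cdot(-)$. The square of $u\cdot(-)$ equals $u^2\cdot(-)=h\cdot(-)$, and $h\cdot(-)$ acts by zero on $k[G]^{S}$ (since $\exp(th)\in S$ fixes every $S$-invariant function), so $D^2=0$. Furthermore $D$ is $G$-invariant on the homogeneous space $X$, with value at the base point equal to the image of $u$ in $\g/kh=T_{[e]}X$, which is nonzero since $u$ is odd; hence $D$ vanishes nowhere on $X$. (When $u^2=0$ the group $S$ is trivial, and this is just $X=G$, $D=u\cdot(-)$.)

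The statement now reduces to the following super-geometric fact: \emph{if $D$ is a nowhere-vanishing odd vector field with $D^2=0$ on $X=S\backslash G$, then $1\in D(k[X])$.} Since $G_0$ is reductive and $S$ is a reductive (indeed torus) subgroup, $X_0=S\backslash G_0$ is an affine variety, so $X$ is an affine super scheme. A nowhere-vanishing odd $D$ with $D^2=0$ can be straightened locally to $\partial/\partial\theta$ for an odd coordinate $\theta$; equivalently, $D$ integrates to a free action of $\mathbb{G}_a^{0|1}$ on $X$, exhibiting $X\to Y:=X/\mathbb{G}_a^{0|1}$ as a $\mathbb{G}_a^{0|1}$-torsor. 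As $Y_0=X_0$ is affine, $Y$ is affine and this torsor is trivial, so $k[X]\cong k[Y]\otimes k[\theta]$ with $D$ acting as $\partial/\partial\theta$; in particular $D(\theta)=1$, and $f:=\theta$ (viewed in $k[X]\subseteq k[G]$) does the job. Alternatively, one may bypass torsors: by the local normal form the $2$-periodic complex $(\mathcal{O}_X,D)$ is exact, and $X$ affine forces exactness to be preserved by global sections, so $1\in\ker D=D(k[X])$.

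The real content is this last claim — passing from the local normal form of $D$ to \emph{global} $D$-exactness of the constant $1$ — and it is here that affineness of $S\backslash G_0$, hence reductivity of $G_0$, enters. Everything preceding it is formal: the identity $[u^2,u]=0$ is precisely what allows $u$ to descend to $S\backslash G$, and the semisimplicity of $h$ is precisely what makes $u^2$ act by zero on $k[S\backslash G]$. (This is consistent with the reformulation in the introduction: $u$ together with the torus $S$ spans inside $G$ a finite-rank analogue of $\mathbb{M}/\mathbb{G}_a$, and the argument shows that already the constant function $1$ obstructs any splitting of $k\cdot 1$ off $k[G]$.)
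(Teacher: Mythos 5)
Your reduction is a legitimate and genuinely different route from the paper's: instead of handling a semisimple $u^2$ directly on $G$, you pass to $X=S\backslash G$ (with $S$ the algebraic hull of $h=u^2$) so that the descended field $D$ satisfies $D^2=0$, and you then need the single statement that a nowhere-vanishing odd vector field with $D^2=0$ on an affine superscheme has $1$ in its image. The problem is that this last statement is where all the content of the proposition lives, and neither of your two justifications actually establishes it. In the torsor version, the assertion that $X\to Y=X/\mathbb{G}_a^{0|1}$ is a torsor over an affine superscheme and that such torsors are trivial is essentially a restatement of the claim: triviality means exactly $k[X]\cong k[Y]\otimes k[\theta]$ with $D\theta=1$, and even setting up $Y$ and the torsor structure already requires the local straightening plus a gluing/vanishing argument. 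In the ``alternative'' version, the step ``$X$ affine forces exactness to be preserved by global sections'' is not a formal consequence of affine vanishing: $D$ is a differential operator, not $\mathcal{O}$-linear, so $\ker D$ and $\operatorname{im}D$ are not quasi-coherent $\mathcal{O}_{X_0}$-submodules of $\mathcal{O}_X$ (for instance $D=\partial_{\theta_1}+\theta_2\partial_x$ has $D^2=0$ but $\ker D$ is not stable under multiplication by $x$), so Serre-type vanishing for quasi-coherent sheaves does not apply as stated. There are also smaller unproved assertions along the way ($[h,u]=0$ implies $\operatorname{Ad}(S)u=u$, which needs the minimality of the algebraic hull; $T_{[e]}X=\g/\operatorname{Lie}S$, not $\g/kh$, though this is harmless since $u$ is odd; existence and affineness of the quotient superscheme $S\backslash G$ with $k[S\backslash G]=k[G]^S$), but the local-to-global step is the genuine gap.

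The gap is fixable by a short computation, which is exactly what the paper does and which also shows the detour through $S\backslash G$ is unnecessary. On an affine $X$ with $D$ nowhere vanishing, choose odd functions $\xi_1,\dots,\xi_k$ and functions $g_1,\dots,g_k$ with $g_1D\xi_1+\cdots+g_kD\xi_k=1$; then $p=g_1\xi_1+\cdots+g_k\xi_k$ satisfies $Dp=1+\eta$ with $\eta$ nilpotent, so $Dp$ is a unit, and $D(Dp)=D^2p=0$, whence $f=p/(Dp)$ satisfies $Df=1$. This is precisely \cref{odd_ss_X_no_splitting_lemma} in the special case where $u^2$ acts by zero; the paper's lemma handles a semisimple $u^2$ directly on $G$ by first projecting onto the zero eigenspace of $h$, so no quotient, no algebraic hull, and no superscheme-quotient or torsor foundations are needed. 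If you want to keep your reduction, you should either insert this division trick as the proof of your key claim, or prove honestly that $\mathbb{G}_a^{0|1}$-torsors over affine superschemes are trivial (e.g. by showing $\ker D$ is quasi-coherent over the quotient and invoking affine vanishing), which takes more work than the claim itself.
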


For the proof, we need an important lemma.

\begin{lemma}\label{odd_ss_X_no_splitting_lemma}
	Let $X$ be an affine supervariety, and suppose that $u$ is an odd vector field on $X$ such that $u^2$ acts semisimply on $k[X]$.  Suppose further that $u$ is everywhere non-vanishing as a vector field.  Then there exists a function $f\in A$ such that $uf=1$.  In particular, $k\cdot 1$ does not split off $k[X]$ as a module over the Lie superalgebra generated by $u$. 
\end{lemma}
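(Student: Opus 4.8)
The plan is to use the hypothesis that $u^2$ acts semisimply on $A := k[X]$ to decompose $A$ into generalized eigenspaces for $h := u^2$, and then find the desired function $f$ with $uf = 1$ by working in the "generalized $0$-eigenspace" — i.e., where $h$ acts nilpotently, hence (by semisimplicity) by zero. First I would write $A = \bigoplus_{\lambda} A_\lambda$, where $A_\lambda$ is the $\lambda$-eigenspace of $h$ acting on $A$, the sum being over $\lambda \in k$. Since $u$ commutes with $h = u^2$, the operator $u$ preserves each $A_\lambda$. On $A_\lambda$ with $\lambda \neq 0$, the operator $u$ is invertible: indeed $u \cdot (\lambda^{-1} u) = \lambda^{-1} h = \id$ on $A_\lambda$, so $u$ already hits $1$ if $1$ has a nonzero component in some $A_\lambda$. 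The subtle case is when $1 \in A_0 = \ker h$ (which it is, since $h(1) = u^2(1) = 0$ because $u$ is a derivation killing constants), so the real content is to produce $f$ with $uf = 1$ inside $A_0$, or more precisely to show $1 \in u(A_0) \oplus u(\bigoplus_{\lambda\ne 0} A_\lambda)$; the second summand is irrelevant since $1 \in A_0$, so we need $1 \in u(A_0)$.

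The key geometric input — that $u$ is everywhere non-vanishing — must be used to rule out the possibility that $u$ restricted to $A_0$ has $1$ outside its image. The idea: $u$ non-vanishing means that at every point $x \in X$, the tangent vector $u_x$ is nonzero, equivalently the odd derivation $u$ together with its square generate, near each point, enough of the structure sheaf. Concretely, I would try to show that locally on $X$ one can find (after passing to a suitable open cover or by a partition-of-unity-type argument on the affine variety $X$, using that we are in the algebraic category with the structure of a locally free sheaf of superalgebras) an odd coordinate $\xi$ with $u\xi = 1$ locally, i.e., $u$ looks like $\partial_\xi$ plus lower-order terms. Then patch: because $u^2 = h$ is semisimple, obstructions to globalizing (which a priori live in $H^1$ of some sheaf) vanish, since the relevant sheaf cohomology is computed by the $h$-eigenspace decomposition and the $0$-eigenspace piece carries a contracting homotopy built from $u$ itself on the $\lambda \ne 0$ parts. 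More precisely, I expect the argument to run: let $B = A_0$; then $u: B \to B$ satisfies $u^2 = 0$ on $B$, and I claim the complex $(B, u)$ is acyclic in the relevant degree, i.e., $1 \in u(B)$.

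The main obstacle, and where I would concentrate the real work, is proving that acyclicity — i.e., that the everywhere-nonvanishing hypothesis forces $1 \in \im(u|_{B})$. One clean route: consider the $k[X]$-module of vector fields and the contraction (interior product / "divergence"-type) operators; since $u$ is nonvanishing, the sub-line-bundle it spans splits off the tangent sheaf, giving a global $1$-form $\omega$ with $\omega(u) = 1$, and then $f$ should be obtained by "integrating $\omega$ along $u$," with the semisimplicity of $u^2$ guaranteeing the integral converges/exists globally rather than just formally. Another route, probably the one to pursue, is to reduce to the universal local model: show that the sub-superalgebra of $A$ generated by $u$-action is, because $u$ is nonvanishing and $u^2$ semisimple, large enough that $A$ becomes free as a module over $k[\xi]/(\xi^2 - h)$-type object, making $uf=1$ solvable by the eigenspace computation above. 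Once $f$ with $uf = 1$ is found, the final sentence is immediate: if $k\cdot 1$ split off $k[X]$ over the Lie superalgebra $\mathfrak{s}$ generated by $u$, then applying the $\mathfrak{s}$-module projection $\pi: k[X] \to k\cdot 1$ and using $\pi(uf) = u\,\pi(f) = 0$ (as $u$ acts by zero on the trivial module) would give $\pi(1) = 0$, contradicting $\pi(1) = 1$.
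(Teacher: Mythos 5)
You have correctly set up the frame (decompose $k[X]$ into eigenspaces of $h=u^2$, note $1$ lies in the $0$-eigenspace, observe $u$ commutes with $h$), and your final deduction of non-splitting from the existence of $f$ with $uf=1$ is fine. But the heart of the lemma --- actually producing such an $f$ from the hypothesis that $u$ is everywhere non-vanishing --- is missing. You state ``I claim the complex $(B,u)$ is acyclic in the relevant degree, i.e.\ $1\in u(B)$'' and then list several candidate strategies (vanishing of an $H^1$ obstruction to patching local coordinates, splitting a line subbundle of the tangent sheaf and ``integrating $\omega$ along $u$'', freeness over a $k[\xi]/(\xi^2-h)$-type algebra) without carrying any of them out; the integration step in particular is not a defined algebraic operation, and the cohomological vanishing you invoke is precisely the assertion to be proved, not a tool you have available. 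So as written this is a gap, not a proof.

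The paper's argument is elementary and avoids all of this machinery, and you may find it clarifying: non-vanishing of $u$ means that for every closed point $x$ there is an odd function $\xi_x$ with $(u\xi_x)(x)\neq 0$; since $X$ is affine, the functions $u\xi_x$ have no common zero, hence generate the unit ideal, so $\sum_i g_iu\xi_i=1$ for finitely many even $g_i$. Setting $p=\sum_i g_i\xi_i$ gives $up=1+\eta$ with $\eta=\sum_i u(g_i)\xi_i\in(k[X]_{\ol{1}})^2$ nilpotent. Now the semisimplicity of $h$ enters exactly where you expected: decomposing $p=\sum_\lambda p_\lambda$ and $\eta=\sum_\lambda\eta_\lambda$ into $h$-eigencomponents and comparing the $\lambda=0$ parts (legitimate since $h(1)=0$ and $h$ preserves $(k[X]_{\ol{1}})^2$) yields $u(p_0)=1+\eta_0=:\alpha$, a unit; moreover $u\alpha=u^2(p_0)=h(p_0)=0$, so $f:=p_0/\alpha$ satisfies $uf=1$. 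In short, your reduction to the $0$-eigenspace is the right instinct, but the non-vanishing hypothesis is exploited through a finite ideal-generation (Nullstellensatz-type) argument followed by a unit correction, not through sheaf-cohomological patching.
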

\begin{proof}
	Since $u$ is everywhere non-vanishing and $X$ is affine, for each closed point $x\in X(k)$ there exist an odd function $\xi_x\in k[X]$ such that $(u\xi_x)(x)\neq0$.  Thus the ideal generated by $\{u\xi_x\}_{x\in X(k)}$ is equal to $k[X]$, so we may choose finitely many functions in this set, say $u\xi_1,\dots,u\xi_k$, and even functions $g_1,\dots,g_k$ such that
	\[
	g_1u\xi_1+\dots+g_ku\xi_k=1.
	\] 
Set $p=g_1\xi_1+\dots+g_k\xi_k$.  We compute that
	\[
	up=g_1u\xi_1+\dots+g_ku\xi_k+u(g_1)\xi_1+\dots+u(g_k)\xi_k=1+\eta
	\]
	where $\eta=u(g_1)\xi_1+\dots+u(g_k)\xi_k$.  Since $u(g_i)$ is odd, $\eta\in(k[X]_{\ol{1}})^2$, and this nilpotent.  Write $h=u^2$ and recall that $[h,u]=0$ by the Jacobi identity.  We may then write $p=\sum\limits_{\lambda}p_{\lambda}$ and $\eta=\sum\limits_{\lambda}\eta_{\lambda}$ according the decomposition of $k[X]$ into eigenspaces of $h$.  Since $h$ preserves $(k[X]_{\ol{1}})^2$, $\eta_{\lambda}\in(k[X]_{\ol{1}})^2$ for all $\lambda$.  Now we see that
	\[
	up=\sum\limits_{\lambda}u(p_{\lambda})=1+\sum\limits_{\lambda}\eta_{\lambda}.
	\]
	Since $h(1)=0$, we find that $\alpha:=u(p_{0})=1+\eta_0$ is a unit in $k[X]$.  Further, $u\alpha=u^2(fp_0)=h(p_0)=0$, so if we set $f:=p_0/\alpha$, we find that
	\[
	u(f)=u(p_0/\alpha)=\alpha/\alpha=1,
	\]
	and we are done.
\end{proof}

We are now ready to prove \cref{no_splitting_ss_odd}.

\begin{proof}
	Suppose that there exists a non-zero element $u$ of $\g_{\ol{1}}^{ss}$. Consider the right-invariant vector field $u_{R}$ on $G$ determined by $u$.  By construction $u_R$ is non-vanishing and $(u_R)^2=(u^2)_{R}$ acts semisimply on $k[G]$.  Then by \cref{odd_ss_X_no_splitting_lemma} there exists a function $f\in k[G]$ such that $uf=1$.  Hence $k\cdot 1$ does not split off $k[G]$, so $\Rep(G)$ is not semisimple.
\end{proof}

\section{Proof II: a differential operator on $G/G_0$}

Consider the homogeneous $G$-space $G/G_0$.  As a $G_0$-module, and as an algebra, we have an isomorphism
\[
k[G/G_0]\cong \Lambda^\bullet\g_{\ol{1}}^*.
\]
In particular $G/G_0$ has one point as a space, and a finite-dimensional algebra of functions.  As a $G$-module we have an identification
\[
	k[G/G_0]=\Ind_{G_0}^{G}k=\Coind_{\g_{\ol{0}}}^{\g}k.
\]
Since $G_0$ is reductive, $k[G/G_0]$ is injective, and explicitly we may write
\[
k[G/G_0]\cong\bigoplus\limits_{L}I(L)^{\oplus [\operatorname{Res}_{G_0}^{G}L:k]},
\]
where $I(L)$ denotes the indecomposable injective $G$-module with socle $L$, an irreducible $G$-module.  In particular, $I(k)$ appears with multiplicity one, so we may write
\[
k[G/G_0]=I(k)\oplus M
\]
for some $G$-module $M$.  Now $\Rep G$ is semisimple if and only if $I(k)=k=P(k)$. Thus assume from now on that $I(k)=P(k)$.  In this case, there exists a operator $\phi\in\End(I(k))$ which takes the head to the tail.  Define $\Phi:=\phi\oplus\mathbf{0}_{M}\in\End(k[G/G_0])$.  The following are straightforward to prove.
\begin{lemma}\label{properties_of_Phi}
	\begin{enumerate}
		\item $\Phi\circ u=u\circ \Phi=0$ for all $u\in\g$;
		\item $\Phi^2\neq 0$ if and only if $I(k)=k=P(k)$ if and only if $\Rep(G)$.  
	\end{enumerate}
\end{lemma}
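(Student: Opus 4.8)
The plan is to make the definition of $\Phi$ completely explicit and then reduce both claims to three elementary facts about $I(k)$: its socle is the trivial module, its head is the trivial module (this is where the standing assumption $I(k)=P(k)$ is used), and it is indecomposable. Under $I(k)=P(k)$ the module $I(k)$ has simple head $k$, so I would write $\pi\colon I(k)\twoheadrightarrow I(k)/\operatorname{rad}I(k)\cong k$ for the quotient onto the head and $\iota\colon k\cong\operatorname{soc}I(k)\hookrightarrow I(k)$ for the inclusion of the socle; ``taking the head to the tail'' then means $\phi=\iota\circ\pi$, so that $\ker\phi=\operatorname{rad}I(k)$ and $\operatorname{im}\phi=\operatorname{soc}I(k)$, and hence $\ker\Phi=\operatorname{rad}I(k)\oplus M$ while $\operatorname{im}\Phi=\operatorname{soc}I(k)$.

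For part (1) I would note that, since $I(k)$ and $M$ are $G$-submodules of $k[G/G_0]$ they are $\g$-stable, and $\operatorname{rad}I(k)$ is a $\g$-submodule of $I(k)$, so every $u\in\g$ acts by zero on the trivial quotient $I(k)/\operatorname{rad}I(k)$; hence $u\cdot I(k)\subseteq\operatorname{rad}I(k)$, and together with $u\cdot M\subseteq M$ this gives $u\cdot k[G/G_0]\subseteq\ker\Phi$, i.e. $\Phi\circ u=0$. Dually, $\operatorname{im}\Phi=\operatorname{soc}I(k)$ is a trivial submodule, so $\g$ annihilates it and $u\circ\Phi=0$.

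For part (2) I would prove the chain $\Phi^2\neq 0\iff I(k)=k\iff I(k)=k=P(k)\iff\Rep G$ semisimple. Since $\Phi^2$ equals $\iota\circ(\pi\circ\iota)\circ\pi$ on $I(k)$ and $0$ on $M$, it is nonzero exactly when the composite $\pi\circ\iota\colon k\to k$ is nonzero, equivalently an isomorphism, equivalently $\operatorname{soc}I(k)$ splits off $I(k)$; by indecomposability of $I(k)$ this forces $I(k)=\operatorname{soc}I(k)\cong k$, while conversely $I(k)=k$ makes $\pi,\iota$ and hence $\Phi$ the identity on $I(k)$, so $\Phi^2\neq 0$. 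Under the standing hypothesis $I(k)=P(k)$ the equality $I(k)=k$ is precisely $I(k)=k=P(k)$, and this last condition is equivalent to semisimplicity of $\Rep G$: $P(k)=k$ says $k$ is projective, which gives semisimplicity by \cref{proj_k}, and conversely in a semisimple category both $I(k)$ and $P(k)$ equal $k$.

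I do not expect a genuine obstacle; the statement is flagged as straightforward. The only points deserving care are bookkeeping ones: checking that $\Phi$ is genuinely \emph{annihilated} by $\g$ on each side rather than merely commuting with it — which rests on the head and socle of $I(k)$ being the \emph{trivial} module, not just simple — and threading the chain of equivalences in (2) through the running assumption $I(k)=P(k)$ so that ``$I(k)=k$'' is upgraded to ``$I(k)=k=P(k)$''. I would also note that, although $u$ may be odd, no sign conventions actually intervene since every composite in sight vanishes.
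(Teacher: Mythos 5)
Your proof is correct and is exactly the argument the paper has in mind: the paper offers no proof (it only says the lemma is straightforward), and your explicit description $\phi=\iota\circ\pi$ with both the head and the socle of $I(k)$ trivial, combined with indecomposability of $I(k)$ and \cref{proj_k}, supplies the intended details, including the correct use of the standing hypothesis $I(k)=P(k)$ in part (2).
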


Since $k[G/G_0]$ is finite-dimensional, we have that $\End(k[G/G_0])=D(G/G_0)$, where $D(G/G_0)$ is the algebra of differential operators on $G/G_0$.  Thus $\Phi$ is a differential operator, and in fact is a $G$-equivariant differential operator by construction.  Write $D^{G}(G/G_0)$ for the space of $G$-equivariant differential operators on $G/G_0$.  Then we have the well known isomorphism
\[
\operatorname{res}:D^{G}(G/G_0)\xto{\sim}\Dist(G/G_0,eG_0)^{G_0}\cong(\UU\g/\g_{\ol{0}}\UU\g)^{G_0}.
\]
Here $\Dist(G/G_0,eG_0)$ denotes the distributions on $G/G_0$ supported at the only point, so in fact this is nothing but $k[G/G_0]^*$ as a $G$-module.   
\begin{lemma}
	$k\cdot \operatorname{res}(\Phi)=(\UU\g/\g_{\ol{0}}\UU\g)^{G}$.
\end{lemma}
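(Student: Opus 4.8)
The plan is to reinterpret both sides of the asserted equality inside $\Hom_G(k[G/G_0],k)$. Under the identifications recorded above, $\UU\g/\g_{\ol{0}}\UU\g$ is, as a $G$-module, the dual $\Dist(G/G_0,eG_0)=k[G/G_0]^*$, and $\operatorname{res}$ sends a $G$-equivariant differential operator $D$ to $\epsilon\circ D$, where $\epsilon\colon k[G/G_0]\to k$ is evaluation at the base point $eG_0$ (equivalently the projection $\Lambda^\bullet\g_{\ol{1}}^*\to\Lambda^0$). Since $k[G/G_0]$ is finite-dimensional, $(k[G/G_0]^*)^G=\Hom_G(k[G/G_0],k)$; thus $(\UU\g/\g_{\ol{0}}\UU\g)^G$ is identified with $\Hom_G(k[G/G_0],k)$, and under this identification $\operatorname{res}(\Phi)$ becomes $\epsilon\circ\Phi$. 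So it suffices to show that $\epsilon\circ\Phi$ is a nonzero element of $\Hom_G(k[G/G_0],k)$ and that $\dim_k\Hom_G(k[G/G_0],k)=1$.

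For the first point, $\operatorname{res}$ is injective and $\Phi\neq0$ (as $\phi\neq0$), so $\epsilon\circ\Phi\neq0$. To see that $\epsilon\circ\Phi$ is $G$-equivariant — a priori it is only $G_0$-equivariant, since $\epsilon$ by itself is — I would invoke \cref{properties_of_Phi}(1). Because $\Phi\circ u=0$ for all $u\in\g$, the operator $\Phi$ annihilates the $G$-submodule $\g\cdot k[G/G_0]$ and hence factors through $k[G/G_0]/\g\cdot k[G/G_0]$; because $u\circ\Phi=0$, the image of $\Phi$ lies in $k[G/G_0]^{\g}$. As $G$ is connected, both $k[G/G_0]/\g\cdot k[G/G_0]$ and $k[G/G_0]^{\g}$ are trivial $G$-modules, and in fact $k[G/G_0]^{\g}=k[G/G_0]^G=k\cdot1$. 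Therefore $\Phi$ factors as $k[G/G_0]\twoheadrightarrow k[G/G_0]/\g\cdot k[G/G_0]\to k\cdot1\hookrightarrow k[G/G_0]$, where the middle arrow is an arbitrary $k$-linear map of trivial $G$-modules and hence automatically $G$-equivariant. Composing the first two arrows with the isomorphism of trivial $G$-modules $\epsilon|_{k\cdot1}\colon k\cdot1\xrightarrow{\sim}k$ exhibits $\epsilon\circ\Phi$ as a composite of $G$-module maps, so $\epsilon\circ\Phi\in\Hom_G(k[G/G_0],k)$.

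For the second point, I would use the decomposition $k[G/G_0]=I(k)\oplus M$ recorded above, in which $M$ is a direct sum of indecomposable injectives $I(L)$ with $L\not\cong k$ (recall $[\operatorname{Res}_{G_0}^{G}k:k]=1$). Since by hypothesis $I(k)=P(k)$, its head is $k$, so $\Hom_G(I(k),k)=\Hom_G(k,k)=k$. For $L\not\cong k$ I claim $\Hom_G(I(L),k)=0$: a nonzero such map would exhibit $k$ as a quotient of $I(L)$, but since $G_0$ is reductive every injective $G$-module is projective, so $I(L)$ is an indecomposable projective and therefore has simple head; the quotient map then forces $\operatorname{head}(I(L))=k$, i.e.\ $I(L)=P(k)=I(k)$, and comparing socles gives $L=k$, a contradiction. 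Hence $\Hom_G(k[G/G_0],k)=\Hom_G(I(k),k)=k$ is one-dimensional, and as $\operatorname{res}(\Phi)=\epsilon\circ\Phi$ is a nonzero element of it, we conclude $k\cdot\operatorname{res}(\Phi)=(\UU\g/\g_{\ol{0}}\UU\g)^G$.

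The step requiring the most care is the middle one: upgrading $\operatorname{res}(\Phi)$ from a $G_0$-invariant distribution to a genuinely $G$-invariant one. This is exactly where \cref{properties_of_Phi}(1) is indispensable — the two-sided vanishing $u\circ\Phi=\Phi\circ u=0$ is precisely what squeezes $\Phi$ between two trivial $G$-modules, which in turn lets us compose the non-$G$-equivariant evaluation $\epsilon$ with $\Phi$ and still land in $\Hom_G(k[G/G_0],k)$. The dimension count is then routine structure theory of $\Rep(G)$, using only the standing hypothesis $I(k)=P(k)$ and the fact that injectives are projective.
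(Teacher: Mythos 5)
Your proof is correct, but it reaches the key dimension bound by a different route than the paper. Both arguments share the same skeleton: show the invariant space is (at most) one-dimensional and that $\operatorname{res}(\Phi)$ is a nonzero vector in it. The paper gets $\dim(\UU\g/\g_{\ol{0}}\UU\g)^{G}\leq 1$ in one stroke from the identification $\UU\g/\g_{\ol{0}}\UU\g\cong\Ind_{\g_{\ol{0}}}^{\g}k$ (Frobenius reciprocity for a module induced from the trivial one), using neither the standing hypothesis $I(k)=P(k)$ nor any structure theory of injectives. You instead dualize, identify $(\UU\g/\g_{\ol{0}}\UU\g)^{G}$ with $\Hom_G(k[G/G_0],k)$, and compute that space to be exactly one-dimensional from the decomposition $k[G/G_0]=I(k)\oplus M$; this is valid, but it consumes the hypothesis $I(k)=P(k)$ and imports two facts the paper never states (over a quasireductive supergroup finite-dimensional injectives are projective, and indecomposable projectives have simple heads) --- true and standard, but heavier machinery than the paper's one-line bound. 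On the other hand, your factorization of $\epsilon\circ\Phi$ through trivial $G$-modules makes explicit a point the paper leaves implicit: it is exactly the two-sided vanishing in \cref{properties_of_Phi}(1), $\Phi\circ u=u\circ\Phi=0$, that upgrades the a priori only $G_0$-invariant distribution $\operatorname{res}(\Phi)$ to a genuinely $G$-invariant one, whereas the paper simply cites that lemma. So the endgame is the same; your dimension count is representation-theoretic where the paper's is a direct reciprocity bound (and hence slightly more economical), while your equivariance step is a worthwhile elaboration of what the paper asserts in passing.
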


\begin{proof}
	We have an identification
	\[
	\UU\g/\g_{\ol{0}}\UU\g\cong\Ind_{\g_{\ol{0}}}^{\g}k,
	\]
	so $\dim (\UU\g/\g_{\ol{0}}\UU\g)^{G}\leq 1$.  On the other hand \cref{properties_of_Phi} implies that $\operatorname{res}(\Phi)\in(\UU\g/\g_{\ol{0}})^{G}$, so we are done.
\end{proof}

We write $v:=\operatorname{res}(\Phi)$.  

\begin{remark}
	The element $v$ is an example of a ghost distribution on the symmetric space $G/G_0$.  Ghost distributions were defined by the author in \cite{sherman2021ghost} for any supersymmetric space.  
\end{remark}

\begin{remark}
	Note that $v$ agrees (up to scalar) with the element $v_{\emptyset}$ defined in section 3 of \cite{gorelik2000ghost}.  This is not difficult to show given what we have proven thus far, and we refer \cite{sherman2021ghost} for a full proof. 
\end{remark}
The next lemma is straightforward.
\begin{lemma}
	We have 
	\[
	\varepsilon(v)=\Phi(1)(eG_0),
	\]
	where $\varepsilon$ is the counit on $\UU\g$.  Thus $\Rep G$ is semisimple if and only if $\varepsilon(v)\neq0$.
\end{lemma}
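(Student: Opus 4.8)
The plan is to establish the displayed identity by unwinding the isomorphism $\operatorname{res}$ together with the coalgebra structure on distributions, and then to deduce the semisimplicity criterion from the structure of $\Phi$ recorded in \cref{properties_of_Phi}.

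For the first equality, I would recall that $\operatorname{res}$ sends a $G$-equivariant differential operator $D$ on $G/G_0$ to the distribution supported at $eG_0$ given by $f\mapsto (Df)(eG_0)$; thus, viewed in $k[G/G_0]^*=\Dist(G/G_0,eG_0)$, we have $v=\operatorname{res}(\Phi)\colon f\mapsto (\Phi f)(eG_0)$. The identification $\Dist(G/G_0,eG_0)\cong\UU\g/\g_{\ol 0}\UU\g$ is one of coalgebras, and under it the counit $\varepsilon$ of $\UU\g$ — which factors through $\UU\g/\g_{\ol 0}\UU\g$ since $\varepsilon$ kills $\g$ — corresponds to the counit of the distribution coalgebra, namely $\delta\mapsto\delta(1)$ with $1\in k[G/G_0]$ the constant function. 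Concretely, a distribution in $\UU\g/\g_{\ol 0}\UU\g$ of positive order annihilates $1$ while $\delta_{eG_0}(1)=1$, so the two functionals agree on the natural basis $\Lambda^\bullet\g_{\ol 1}$ of $\Dist$. Combining these gives $\varepsilon(v)=v(1)=(\Phi\cdot 1)(eG_0)=\Phi(1)(eG_0)$.

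For the equivalence I would argue as follows. The constant function $1$ is $\g$-invariant, so by \cref{properties_of_Phi}(1) the element $\Phi(1)$ is $\g$-invariant as well; since $(k[G/G_0])^{\g}=(\Coind_{\g_{\ol 0}}^{\g}k)^{\g}=\Hom_{\g_{\ol 0}}(k,k)=k\cdot 1$, we may write $\Phi(1)=c\cdot 1$, and then $\varepsilon(v)=c$. It remains to show $c\neq 0$ exactly when $\Rep G$ is semisimple. The constants $k\cdot 1$ form the unique trivial submodule of $k[G/G_0]=I(k)\oplus M$, hence equal $\operatorname{socle}(I(k))$ inside the summand $I(k)$; and $\Phi$ acts on $I(k)$ as the operator $\phi$, which annihilates $\operatorname{rad}(I(k))$ and carries the head isomorphically onto the socle. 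If $\Rep G$ is semisimple then $I(k)=k$, so head and socle coincide and $\phi$ restricts to a nonzero scalar on $k\cdot 1$, giving $c\neq 0$; if $\Rep G$ is not semisimple then $I(k)\neq k$ is indecomposable with simple head, so $\operatorname{socle}(I(k))\subseteq\operatorname{rad}(I(k))=\ker\phi$, whence $\Phi(1)=0$ and $c=0$. One can equally invoke \cref{properties_of_Phi}(2), noting $\Phi^2(1)=c^2\cdot 1$ and that $\Phi$ vanishes off the $I(k)$-summand.

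This is really a bookkeeping argument, so I do not expect a genuine obstacle; the one point to be careful about is matching conventions in the first step — on which side $\g_{\ol 0}$ sits in $\UU\g/\g_{\ol 0}\UU\g$, and the precise form of $\operatorname{res}$ and of the counit on distributions — so that $\varepsilon(v)$ is correctly identified with the evaluation of $\Phi(1)$ at the base point rather than with its image under some twist.
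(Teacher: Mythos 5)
Your argument is correct, and since the paper simply declares this lemma ``straightforward'' and omits any proof, your write-up supplies exactly the intended details: $\operatorname{res}(\Phi)$ is evaluation $f\mapsto(\Phi f)(eG_0)$, the counit of $\UU\g/\g_{\ol 0}\UU\g$ corresponds to pairing with the constant function $1$, and the equivalence follows from $\Phi(1)\in(k[G/G_0])^{\g}=k\cdot 1$ together with the fact that $1$ spans $\operatorname{socle}(I(k))$, which lies in $\operatorname{rad}(I(k))=\ker\phi$ precisely when $I(k)\neq k$. Your cautionary remark about left/right conventions in $\UU\g/\g_{\ol 0}\UU\g$ is the only genuine point of care, and you handle it correctly since $\varepsilon$ kills $\g$ and hence descends to either quotient.
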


We now may prove \cref{main_reduced_statement}.

\begin{proof}
	Suppose that $u\in\g_{\ol{1}}^{ss}$ is nonzero.  Then $DS_{u}\UU\g/\g_{\ol{0}}\UU\g=0$ because $\UU\g/\g_{\ol{0}}\UU\g$ is projective, being induced from $\g_{\ol{0}}$.  However $u\cdot v=0$ by definition of $v$, so there exists $v'\in\UU\g/\g_{\ol{0}}\UU\g$ such that $v=u\cdot v'=v'u$.  But then
	\[
	\varepsilon(v)=\varepsilon(v'u)=\varepsilon(v')\varepsilon(u)=0.
	\]
\end{proof}

\subsection{Reductivity of $OSp(1|2n)$}

All that remains is to prove that $\Rep OSp(1|2n)$ is semisimple.  Let $\g=\o\s\p(1|2n)$.  By our work above, it suffices to prove that for a non-zero element $v\in(\UU\g/\g_{\ol{0}}\UU\g)^{\g}$ we have $\varepsilon(v)\neq0$.  To show this we only need to construct $v$ and check that $\varepsilon(v)\neq0$.  

However this has already been done in \cite{djokovic1976semisimplicity}.  There, they prove that the element
\[
v=(1+t_1)(3+t_2)\cdots((2n-1)+t_n)
\]
is $\g$-invariant, where $t_i=a_{i}b_i$, and $(a_1,\dots,a_n,b_1,\dots,b_n)$ are a basis of $\g_{\ol{1}}$ such that $(a_i,b_j)=\delta_{ij}$, where $(-,-)$ is $\s\p(2n)$-invariant form on $\g_{\ol{1}}$ as discussed in \cref{osp(1|2n)_subsec}.  Note that in \cite{djokovic1976semisimplicity} they actually consider the $\g$-invariants of $\UU\g/(\UU\g)\g_{\ol{0}}$.  To go between their case and ours we only need apply the antipode on $\UU\g$.

\bibliographystyle{amsalpha}
\bibliography{bibliography}

\textsc{\footnotesize Dept. of Mathematics, Ben Gurion University, Beer-Sheva,	Israel} 

\textit{\footnotesize Email address:} \texttt{\footnotesize xandersherm@gmail.com}

\end{document}